\newcommand{\CTX}{\mathsf{Contexts}}
\newcommand{\FORM}{\mathsf{Formulae}}
\newcommand{\MSEQ}{\mathsf{MultiSequents}}
\newcommand{\SEQ}{\mathsf{Sequents}}
\newcommand{\TERM}{\mathsf{Terms}}
\newcommand{\B}[1]{\mathbf{#1}}
\newcommand{\TT}[1]{\mathtt{#1}}
\newcommand{\C}[1]{\mathcal{#1}}
\newcommand{\BB}[1]{\mathbb{#1}}
\newcommand{\F}[1]{\mathfrak{#1}}
\newcommand{\To}{\Rightarrow}
\definecolor{color0}{HTML}{4682B4}
\begin{document}

\title{A New Conjecture about Identity of Proofs}

\author{Paolo Pistone}
\institute{Paolo Pistone \at Dipartimento di Informatica-Scienza e Ingegneria, Universit\`a di Bologna, Via Faustino Malaguti 1/D, 40126 Bologna, Italy, \email{paolo.pistone2@unibo.it}}

\maketitle

\abstract{A central problem in proof-theory is that of finding criteria for identity of proofs, that is, for when two distinct formal derivations can be taken as denoting the same logical argument. In the literature one finds criteria which are either based on proof normalization (two derivations denote the same proofs when they have the same normal form) or on the association of formal derivations with graph-theoretic structures (two derivations denote they same proof when they are associated with the same graph). 
In this paper we argue for a new criterion for identity of proofs which arises from the interpretation of formal rules and derivations as natural transformations of a suitable kind. We show that the naturality conditions arising from this interpretation capture in a uniform and elegant ways several forms of ``rule-permutations'' which are found in proof-systems for propositional, first- and second-order logic.}

\section{Introduction}

It is often argued that a distinctive feature of logical arguments is that these are invariant under any permutation of the domain of objects they involve. So, if an argument regarding complex numbers, elliptic curves and projective spaces  
is valid only by virtue of logic, the argument remains valid if we replace its algebraic objects by tables, chairs and beer mugs (to quote a famous example of Hilbert's). 

By the way, at least since Aristotle's \emph{Prior Analytics}, logicians are used to describing logical arguments in a \emph{schematic} way, that is, by employing variables or meta-variables standing for the possible objects, terms, or propositions that may occur in them. An ancient example every logician knows is the Barbara syllogism: if all $B$s are $C$s, and all $A$s are $B$s, then all $A$s are $C$s.  
Syllogisms contain letters standing for terms; instead, the formal languages we employ today in logic contain variables for different kinds of entities: for \emph{individuals}, for \emph{predicates}, for \emph{propositions} (or 0-ary predicates), and even for entities of higher orders.

The development of proof-theory, the branch of logic that studies logical arguments, their shapes, their dynamics and their identity conditions, has highlighted this schematic nature even further. Purely logical proofs are often formalized by means of sequent calculus or natural deduction rules, whose formulation not only involves variables for individuals and predicates, but also schematic letters, or meta-variables, for arbitrary formulae, for {contexts} (lists or multisets of formulae), and sometimes even for derivations. For instance, typical sequent calculus rules like those below 
\begin{center}
\adjustbox{scale=0.7}{\begin{minipage}{1.4\textwidth}\begin{align}
\AXC{$\Gamma \vdash A$}
\AXC{$\Delta \vdash B$}
\RL{$\land$R}
\BIC{$\Gamma, \Delta \vdash A\land B$}
\DP
\qquad\qquad
\AXC{$\Gamma \vdash A$}
\AXC{$\Gamma \vdash B$}
\RL{$\land$R'}
\BIC{$\Gamma \vdash A\land B$}
\DP
\end{align}\end{minipage}}\end{center}
use schematic letters for formulae ($A$ and $B$) and for contexts ($\Gamma$, $\Delta$), where the latter play an essential role, since they make the two rules non-equivalent (as they correspond to the multiplicative and additive formulation of the right-rule for conjunction). 

%


\subparagraph{The Problem of Identity of Proofs}

A central problem in proof-theory is that of understanding the conditions under which distinct formal derivations may represent the same logical argument. Following \citep{Dosen2003, Tranchini2021}, this corresponds to the problem of defining the \emph{identity of proofs} relation.

Indeed, it is often 
the case that semantic or just intuitive considerations lead to take two distinct derivations (that is, derivations which might differ either by the kind or by the order of the rules they employ) as formalizations of ``the same'' proof. For instance, in the proof-theoretic semantic tradition \citep{Prawitz1971, Prawitz1973, Dosen2003} it is common to take usual cut-elimination/normalization procedures as ways of making the content of a proof more explicit: this implies that if a formal derivation $\Pi$ can be converted to $\Pi'$ using such procedures, then $\Pi$ and $\Pi'$ can be seen as different stages of the process of making the same logical argument explicit.
To make another example, under the so-called \emph{functional interpretation} \citep{CH, Girard1989}, where a proof of, say, $A\supset B$ is interpreted as some {function} going from objects of type $A$ to objects of type $B$, two distinct derivations denote the same proof when they correspond to the same function.

Nevertheless, finding convenient characterizations of the identity of proofs relation in purely syntactic terms can be difficult, since the property of having the same denotation is a genuinely semantic one, and in several cases it needs not even be algorithmically decidable \citep{Okada1999, PistoneCSL}.

The criteria for identity of proofs in the literature generally belong to two families. Firstly, those \citep{Prawitz1973,Dosen2003} where one considers \emph{reduction} relations over formal derivations (like those arising from normalization in natural deduction), and one stipulates that two derivations $\Pi,\Pi'$ denote the same proof when $\Pi$ and $\Pi'$ have the same \emph{normal form} under such reductions (this is what \citep{Dosen2003} calls the ``normalization conjecture''). 
Secondly, those where one associates a formal derivation $\Pi$ with some underlying graph-theoretic structure $\C G(\Pi)$ that is supposed to capture the ``essence'' of $\Pi$ independently from the ``bureaucracy'' \citep{pregoi, Girard1989} of sequent calculus rules. For instance, $\C G(\Pi)$ might be what Lambek calls the \emph{generality} of $\Pi$ \citep{Lambek1968, Lambek1969}, which essentially corresponds to the \emph{graphs} of \citep{EKelly1966}, or the (multiplicative) \emph{proof nets} of \citep{LLSS}. One then stipulates that that two derivations $\Pi,\Pi'$ denote the same proof when $\C G(\Pi)=\C G(\Pi')$ (this is what \citep{Dosen2003} calls the ``generality conjecture'').

More recently, much research in mathematical logic has focused on \emph{canonical} proof-systems, that is, on formalizations such that logical arguments that only differ by inessential syntactic aspects (like free permutations of rules, see Section \ref{sec:permutations}) are represented by the same formal entity. 
Among such proof-systems we can mention those based on proof nets \citep{LLSS}, string diagrams \citep{Mellies2012}, focusing \citep{Scherer2017}, deep inference \citep{guglielmi2010}, and combinatorial proofs \citep{Hughes2006, Hughes2018, Strass2021}.

Unfortunately, these approaches are not always capable of fully capturing the identity of proof relation, as it results from purely semantic considerations. In some cases, this can be done only at the price of making the syntax too complex to be effectively surveyable (i.e.~of making it impossible to check if a derivation is correct in polynomial time \citep{Heij2014}).
%
%
%
%
%
%
Things are specially problematic in presence of arithmetical primitives \citep{Okada1999} or higher-order quantification (see \citep{PistoneCSL}). 
Furthermore, in some cases approaches based on normal form and on graphs may even lead to incompatible identity of proofs relations (see \citep{Dosen2003}, p.~491).

\subparagraph{Schematic Arguments and Naturality Conditions}

A central thesis of this article is that a finer mathematical formalization of schematic logical arguments is a key ingredient to understand identity conditions for proofs. 
%
%
%
Intuitively, a schematic argument $\Pi( v)$, involving some (possibly meta-)variable $ v$, 
has a finite number of schematic premisses $P_{1}(  v),\dots, P_{k}( v)$ (which might be either formulas or sequents) and one (or more) schematic conclusions $C_{1}( v),\dots, C_{h}(v)$. 
If the schematic letter $v$ ranges over some domain $D$, then this means that for any choice of an element $d$ of $D$, one can obtain a ``concrete'' argument $\Pi(d)$ of premisses $P_{1}(d),\dots, P_{k}(d)$ and conclusions $C_{1}(d),\dots, C_{h}(d)$. 
Importantly, the schematic nature of $\Pi(v)$ not only ensures that one has a family $(\Pi(d))_{d\in D}$ of arguments indexed by the elements of the domain; it also ensures that all such concrete arguments $\Pi(d),\Pi(e),\dots$ are constructed ``in the same way'', that is, using instances of the same rules: they are all instances of the unique schematic $\Pi(v)$. 

Schematic constructions of this kind abound in mathematics, and are often described as \emph{natural transformations}. 
For instance, for any group $G$ one can define a homomorphism (actually, an \emph{iso}morphism) $\varphi_{G}:G\longrightarrow \mathrm{Hom}(\BB Z,G)$ between $G$ and the group of homomorphisms between the additive group $\BB Z$ of integers and $G$, where $\varphi_{G}(g)(z)=g^{z}=g\cdot g\cdot \dots \cdot g$.
The operation $\varphi_{G}$ is schematic in that, for any group $G$, it produces a map which is always defined ``in the same way'', that is, independently of the particular features of $G$. This uniformity of $\varphi_{G}$ can be expressed in mathematical terms by observing that $\varphi_{G}$ is a natural transformation. This means that if $G,H$ are two groups, and $\alpha:G\longrightarrow H$ is some group homomorphism (inducing another group homomorphism $\alpha': \mathrm{Hom}(\BB Z,G)\longrightarrow \mathrm{Hom}(\BB Z,H)$ given by $\alpha'(f)(z)=\alpha(f(z))$), then the same function is obtained by either first applying $\alpha$ and then $\varphi_{H}$, or by first applying $\varphi_{G}$ and then $\alpha'$:
\begin{center}
\adjustbox{scale=0.7}{
\begin{minipage}{1.4\textwidth}
\begin{equation}\label{eq:natu0}
\begin{tikzcd}
G \ar{rr}{\varphi_{G}}\ar{d}[left]{\alpha} & &  \mathrm{Hom}(\BB Z,G) \ar{d}{\alpha'} \\
H \ar{rr}[below]{\varphi_{H}} & & \mathrm{Hom}(\BB Z,H)
\end{tikzcd}
\end{equation}
\end{minipage}}
\end{center}
Intuitively, the diagram above ensures that the schematic operation $\varphi_{G}$ is so uniform that one can freely decide to first move from $G$ to $H$ and then apply it, or to go the other way round. 
At the same time, the diagram above is nothing but a visual illustration of the equation $\alpha'(\varphi_{G}(\_))= \varphi_{H}(\alpha(\_))$, which holds since 
$\alpha'(\varphi_{G}(g))(z)=\alpha(g^{z})=\alpha(g)^{z}=\varphi_{H}(\alpha(g))(z)$. Thus, naturality conditions not only capture a notion of uniformity for schematic operations, but they do this by means of an essentially equational criterion. 
%

The goal of this paper is to show that the schematic nature of logical rules and logical arguments 
can be expressed by means of naturality conditions like \eqref{eq:natu0}, and that such equational conditions provide ways to express principles of identity of proofs in a uniform and elegant way.

%

%
The idea that logical arguments correspond to natural transformations is not new. For instance, graphical entities like proof-nets or string diagrams commonly used to formalize proofs in linear logic do correspond to natural transformations of some kind (due to their close connection with the functorial calculus from \citep{EKelly1966}).  
%
%
%
Moreover, in the study of second-order proof-systems and their connection with \emph{polymorphic} lambda-calculi \citep{Bainbridge1990, Girard1992, StudiaLogica, PistoneCSL}, naturality conditions have been widely investigated
as a means to capture \emph{program equivalence} (that is, the property by which two programs compute the same function). In particular, the so-called notion of \emph{parametricity} \citep{Reynolds1983, Bainbridge1990}, corresponding to the idea that a proof of a universally quantified formula/type $\forall p.A$ coincides with a schematic proof of $A[p]$, 
can in some cases (see \citep{Hasegawa2009, StudiaLogica}) be expressed in an elegant way by means of naturality (or, more generally, of \emph{di}naturality \citep{MacLane}) conditions.

In \citep{StudiaLogica} the author showed that the naturality conditions arising from parametricity correspond, in proof-theoretical terms, to the validity of a class of rule permutations which can be used to study the identity of proof relation in second-order logic (see also \citep{PistoneCSL} for some decidability results on this relation). 
In this paper we push this perspective further, by introducing new naturality conditions, and consequently, new principles of identity of proofs, also for propositional and first-order logic.


%
%
%

%
%

\subparagraph{Outline of the Paper}

In Section 
\ref{sec:categories} we introduce some basic categorical language for proof-theory (like the categories $\CTX$ and $\FORM$ of contexts and formulae) and we recall the basic notions of functor and natural transformation. 

In Section 
\ref{sec:permutations} we show that the criterion for identity of proofs arising from free permutation of rules in the sequent calculus is captured by a naturality condition with respect to the schematic letters for contexts in sequent calculus rules.

In Section
\ref{sec:disjunction} we show that the criterion of identity of proofs arising from usual permutative conversions in natural deduction is captured by a naturality condition with respect to the schematic letters for formulae in the conclusions of so-called generalized elimination rules.

In Section  
\ref{sec:parametricity} we discuss the {parametric} interpretation of second-order proofs \citep{Bainbridge1990, Girard1992},
and we show that it can be used to justify rule permutations involving rules for the second-order quantifiers. We then show that invariance under such permutations is captured by 
a naturality condition with respect to propositional variables in second-order derivations. This section essentially surveys results from \citep{StudiaLogica, SL2}.

In Section 
\ref{sec:unification}
we provide a sketch of a ``parametric'' interpretation of first-order logic, and we introduce a naturality condition with respect to individual variables in first-order derivations. We then deduce from this condition the validity of some new rule permutations for first-order logic.

\section{Categories of Derivations and Natural Transformations}\label{sec:categories}

This article is primarily intended for logicians and philosophers with some background in proof-theory. For this reason we will presuppose basic acquaintance with sequent calculus and natural deduction systems, as well as with their dynamics, i.e.~cut-elimination and normalization.
Instead, we do not presuppose knowledge of category theory, and in this section we will introduce all categorical notions that will appear in the text. We will devote special attention to introducing three categories that will be used to model formal derivations.

\subparagraph{Preliminaries on Categories}

Usual logical languages rely on a twofold ontology: on the one hand one has \emph{propositions} (and, more generally, \emph{formulae}), and on the other hand one has \emph{derivations}, which are certain objects having one formula as its \emph{conclusion} and a list of formulae as \emph{assumptions}. 
In proof-theory, the branch of logic in which derivations are the central object of investigation, it is common to consider also a third kind of entities, beyond formulae and derivations: these are the \emph{relations} existing between derivations with same conclusion and same assumptions. 
A typical example is provided by the conversion rules defining cut-elimination procedures in sequent calculus or normalization in natural deduction. Other examples are provided by {equivalence} relations over derivations, which might either be generated from conversion rules or be induced by some denotational semantics, and which will constitute our main object of investigation in this paper.

 

A category $\C C$ is an algebraic structure relying on a similar threefold ontology: firstly, one has a class of \emph{objects} $A,B,C,\dots$; secondly, one has a class of \emph{arrows} $f,g,h,\dots$ between objects, i.e.~where each arrow has one \emph{target} object and one \emph{source} object (so that it makes sense to write $f:A\longrightarrow B$); we will indicate as $\C C( A,B)$ the hom-sets of $\C C$, that is, the sets of arrows with source $ A$ and target $B$; thirdly, one has a class of \emph{diagrams} between arrows, i.e.~of equations between arrows having the same source and same target.

A category can be seen as a generalization of the usual algebraic notion of \emph{monoid}. We recall that a monoid is a set $M$ equipped with a binary associative operation $*: M\times M\longrightarrow M$ together with a unit $1\in M$ (i.e.~such that $x*1=1*x=x$ holds for all $x\in M$). 
Similarly, a category comes with a binary associative operation $\circ : \C C (B,C)\times \C C(A,B)\longrightarrow \C C(A,C)$ called \emph{composition} (where associativity reads as the diagram $h\circ (g\circ f)=(h\circ g)\circ f$ for all 
arrows $f:A\longrightarrow B, g:B\longrightarrow C, h:C\longrightarrow D$) and, for any object $A$, with a special arrow $1_{A}:A\longrightarrow A$, called the \emph{identity on $A$}, satisfying ``unit'' equations $f\circ 1_{A}=f$ and $1_{A}\circ g=g$, for all $f:A\longrightarrow B$ and $g:C\longrightarrow A$.

A monoid $(M, *, 1)$ can thus be seen as a special case of a category, namely one with precisely one object $\star$: any $x\in M$ corresponds to an arrow $x:\star\longrightarrow \star$, and $*$ and $1$ correspond to composition and to the identity arrow $1_{\star}$, respectively.

%

%
%

One of the simplest category with more than one object one can think of is the category $\mathsf{Set}$ of sets: its objects are just sets, its arrows are functions between sets, and diagrams correspond to plain equations between functions. Similarly, most algebraic or geometric structures give rise to suitable categories: one has a category $\mathsf{Group}$ of groups and their homomorphisms, a category $\mathsf{Ring}$ of rings and their homomorphisms, a category
 $\mathsf{Top}$ of topological spaces and continuous functions, a category $\mathsf{Vec}$ of vector spaces and linear functions, etc.  

There are two basic operations that can be used to construct new categories from known ones, and that we will exploit throughout the text. Firstly, for any category $\C C$ we will consider its \emph{opposite} category $\C C^{\mathsf{op}}$. This category has the same objects and arrows as $\C C$, but sources and targets of the arrows are inverted: an arrow $f:A\longrightarrow B$ in $\C C^{\mathsf{op}}$ is the same as an arrow $f:B\longrightarrow A$ in $\C C$ (we leave it to the reader to check that all the diagrams of a category still hold under this reversal). So for example $\mathsf{Set}^{\mathsf{op}}$ is the category with objects being sets and arrows between two sets $A$ and $B$ being functions $f: B\longrightarrow A$ (i.e.~$\mathsf{Set}^{\mathsf{op}}(A,B)=\mathsf{Set}(B,A)$). 
%
%

Secondly, we will make large use of \emph{quotients}: suppose $\C C$ is a category and $\equiv_{A,B}\subseteq \C C(A,B)\times \C C(A,B)$ is a family of equivalence relations on the hom-sets of $\C C$; if the equivalences $\equiv_{A,B}$ satisfy the congruence conditions below
\begin{center}
\adjustbox{scale=0.7}{
\begin{minipage}{1.4\textwidth}
\begin{align}\label{condi}
f\equiv_{A,B}f' ,g\equiv_{B,C}g' \ \longrightarrow \ g\circ f \equiv_{A,C}g'\circ f' 
\end{align}\end{minipage}}\end{center}
then we can define a new category ${\C C}/{\equiv}$, that we call the \emph{quotient of $\C C$ under $\equiv$}, having the same objects as $\C C$ and where an arrow from $A$ to $B$ is a class of equivalence $[f]_{\equiv_{A,B}}$ of arrows from $A$ to $B$ in $\C C$.

\begin{remark}\label{rem:multicat}
Just as a derivation has one conclusion but may have a finite number of assumptions, one can adjust the notion of category to this idea: a \emph{multicategory} is defined similarly to a category, but instead of arrows one has \emph{multi-arrows} which have, instead of one single source object, a list of source objects. We will indicate a multi-arrow in a multicategory as
$f: A_{1},\dots, A_{n}\longrightarrow B$ or, more concisely, as $f:\vec A\longrightarrow B$.

In a multicategory one has identity multi-arrows $1_{A}:A\longrightarrow A$ (where $A$ can be seen as the 1-object list $\vec A=A$), and composition of multi-arrows is defined as follows: given multi-arrows  $g: B_{1},\dots, B_{n}\longrightarrow C$ and  $f_{1}:\vec A_{1}\longrightarrow B_{1},\dots, f_{n}: \vec A_{n}\longrightarrow B_{n}$, the {composition} multi-arrow is $g\circ ( f_{1},\dots, f_{n}): \vec A_{1},\dots, \vec A_{n}\longrightarrow C$.
Multi-arrows are required to satisfy identity and associativity diagrams analogous to those of a category (for more details, the reader can look at \citep{Leinster2004}, p.~35).
\end{remark}
%
%
%

\subparagraph{Three Categories of Derivations}

Categories arise in a natural way in practically any field of mathematics, and logic makes no exception. We are now going two present a few useful categories in which arrows correspond to formal derivations. 

Let us fix a logical system $\C S$, be it in sequent calculus or natural deduction\footnote{In case of a sequent calculus system, we make the further assumption that $\C S$ includes the usual structural rules $(e),(w)$ and $(c)$ of \emph{exchange}, \emph{weakening} and \emph{contraction}.}, so that we can properly speak of formulae, derivations between formulae and relations between derivations of the same formulae.

We make the further assumption that $\C S$ comes with a well-defined cut-elimination (or normalization) procedure, so that any derivation can be transformed into one in normal form in a finite number of steps. 
The cut-elimination/normalization procedure defines a relation between derivations, and we let $\equiv^{\mathrm{cut}}$ indicate the smallest equivalence relation containing this relation (that is, its reflexive, transitive and symmetric closure). 

In the following lines we mostly rely on sequent calculus, but the whole discussion can easily be converted to natural deduction.

The first category we consider is the \emph{category of formulae of $\C S$}, noted $\FORM_{\C S}$.
Actually, we consider a whole family of categories $\FORM_{\C S}(\vec C)$, where $\vec C=C_{1},\dots, C_{k}$ is some fixed list of formulae. Intuitively, the category $\FORM_{\C S}(\vec C)$ captures reasoning ``under the assumptions'' $\vec C$.\footnote{When the list $\vec C$ is empty we simply write $\FORM_{\C S}$ for $\FORM_{\C S}(\vec C)$.}
 More formally, the objects of $\FORM_{\C S}(\vec C)$ are, as one might expect, the formulae of $\C S$; an arrow between formulae $A$ and $B$ is a derivation $\Pi$ of the sequent $\vec C,A\vdash B$ (we note this as $\Pi: [\vec C], A\vdash B$). 
Diagrams are interpreted as equations between derivations which hold when equality is replaced by the equivalence relation $\equiv^{\mathrm{cut}}$.

For any formula $A$, the identity arrow $1_{A}$ in $\FORM_{\C S}(\vec C)$ is given by the obvious derivation of $[\vec C],A\vdash A$. The composition of derivations $\Pi:[ \vec C],A \vdash B$ and $\Sigma: [\vec C],B \vdash C$ is the derivation of $[\vec C],A \vdash C$ obtained by applying an instance of the $cut$-rule to $\Pi$ and $\Sigma$ (followed by a few contraction rules) as illustrated below:\begin{center}
\adjustbox{scale=0.7}{\begin{minipage}{1.4\textwidth}\begin{align}
\AXC{$\Pi$}
\noLine
\UIC{$[\vec C], A \vdash B$}
\AXC{$\Sigma$}
\noLine
\UIC{$[\vec C], B \vdash C$}
\RL{$cut$}
\BIC{$[\vec C], [\vec C], A \vdash C$}
\doubleLine
\RL{$c$}
\UIC{$[\vec C], A \vdash C$}
\DP\end{align}\end{minipage}}\end{center}

The reader can check that identity and associativity diagrams do hold under $\equiv^{\mathrm{cut}}$.

%
%
%
The second category that we consider is the \emph{category of contexts of $\C S$}, noted $\CTX_{\C S}$. The 
objects of $\CTX_{\C S}$ are, indeed, contexts, that is, finite lists of formulae $\vec A=A_{1},\dots, A_{n}$ of $\C S$; an arrow between two contexts $\vec A$ and $\vec B=B_{1},\dots, B_{n'}$ (noted $\vec \Pi:\vec A \vdash \vec B$) is a list $\vec \Pi=\Pi_{1},\dots, \Pi_{n'}$ of $n'$ derivations of the sequents $\vec A \vdash B_{1},\dots, \vec A \vdash B_{n'}$; as for $\FORM_{\C S}$, diagrams are interpreted as equations holding up to $\equiv^{\mathrm{cut}}$.

For any context $\vec A$, the identity morphism $1_{\vec A}: \vec A \vdash \vec A$ of $\CTX_{\C S}$ is made by the list of obvious derivations of $\vec A\vdash A_{i}$; the composition of (lists of) derivations $\vec \Pi: \vec A\vdash \vec B$ and $\vec \Sigma: \vec B \vdash \vec C$ is again obtained by applying (a few instances of) the $cut$-rule: it is the list of derivations $\vec \Theta=\Theta_{1},\dots, \Theta_{m}$ where $\Theta_{j}: \vec A \vdash C_{j}$ is defined as follows:

\begin{center}
\adjustbox{scale=0.7}{\begin{minipage}{1.4\textwidth}
\begin{align}
\Theta_{j} \quad = \quad
\AXC{$\Sigma_{j}$}
\noLine
\UIC{$B_{1},\dots, B_{n}\vdash C_{j}$}
\AXC{$\Pi_{n}$}
\noLine
\UIC{$\vec A \vdash B_{n}$}
\RL{$cut$}
\BIC{$\vec A, B_{1},\dots,B_{n-1}\vdash C_{j}$}
\noLine
\UIC{$\ddots$}
\noLine
\UIC{$\vec A, \dots, \vec A, B_{1}\vdash C_{j}$}
\AXC{$\Pi_{1}$}
\noLine
\UIC{$\vec A \vdash B_{1}$}
\RL{$cut$}
\BIC{$\vec A, \dots, \vec A \vdash C_{j}$}
\doubleLine
\RL{$c$}
\UIC{$\vec A \vdash C_{j}$}
\DP
\end{align}\end{minipage}}\end{center}

%

Finally, we consider a third category, the \emph{category of sequents of $\C S$}, noted $\SEQ_{\C S}$.
As for the category of formulae, we actually consider a family of categories $\SEQ(\vec C)$, indexed by a finite list of formulae $\vec C=C_{1},\dots, C_{k}$ to be seen as assumptions.\footnote{As in the case of $\FORM_{\C S}$, when $\C C$ is the empty list we simply write $\SEQ_{\C S}$ for $\SEQ_{\C S}(\vec C)$.} 
The objects of $\SEQ_{\C S}(\vec C)$ are sequents $\vec A\vdash A $ (i.e.~pairs made of a context and a formula\footnote{For simplicity we here restrict ourselves to sequents with exactly one formula on the right, but one can easily adapt the definition to sequents with a  context, instead of a formula, on the right.}); an arrow between two sequents $\vec A\vdash A$ and $\vec B\vdash B$ (noted $\Pi: ([\vec C],\vec A\vdash A)\longrightarrow ([\vec C],\vec B\vdash B)$) is a derivation $\Pi$ of $\vec C,\vec B\vdash B$ which might possibly employ the sequent $\vec C,\vec A \vdash A$ as an axiom. We indicate such extra-axioms with a dashed line as follows:
\begin{center}
\adjustbox{scale=0.7}{\begin{minipage}{1.4\textwidth}
\begin{align}
\AXC{}
\dashedLine
\UIC{$[\vec C], \vec A \vdash A$}
\DP
\end{align}\end{minipage}}\end{center}
Diagrams are always interpreted as equations holding under $\equiv^{\mathrm{cut}}$.

The identity $1_{\vec A\vdash A}$ is the derivation only consisting of $\AXC{}\dashedLine\UIC{$\vec A \vdash A$}\DP$, and the composition of $\Pi: ([\vec C],\vec A \vdash A)\longrightarrow ([\vec C],\vec B\vdash  B)$ and $\Sigma: ([\vec C],\vec B\vdash  B)\longrightarrow ([\vec C],\vec C\vdash C)$ is the derivation obtained by replacing each occurrence of $\AXC{}\dashedLine\UIC{$[\vec C],\vec B\vdash  B$}\DP$ in $\Sigma$ by $\Pi$.

\begin{remark}
We can also present $\SEQ_{\C S}(\vec C)$ as a multicategory, that we will indicate as $\MSEQ_{\C S}(\vec C)$: in this case a multi-arrow 
$\Pi: ([\vec C], \vec A_{1}\vdash A_{1};\dots; [\vec C], \vec A_{n}\vdash A_{n}) \longrightarrow ([\vec C], \vec B \vdash B)$ is a derivation possibly using any of the axioms below
\begin{center}
\adjustbox{scale=0.7}{\begin{minipage}{1.4\textwidth}
\begin{align}
\AXC{}
\dashedLine
\UIC{$[\vec C], \vec A_{1} \vdash A_{1}$}
\DP
\qquad
\dots
\qquad
\AXC{}
\dashedLine
\UIC{$[\vec C], \vec A_{n} \vdash A_{n}$}
\DP
\end{align}\end{minipage}}\end{center}
\end{remark}

The categories $\SEQ_{\C S}(\vec C)$ also provide an alternative way of representing natural deduction derivations: we can take a sequent $\vec A \vdash A$ as representing the set of natural deduction derivations of $A$ with open assumptions $\vec A$ (and $\vec C$);  an arrow $\Pi: ([\vec C], \vec A \vdash A) \longrightarrow ([\vec C], \vec B \vdash B)$ can be taken as representing a natural deduction derivation ``with a hole''\footnote{This is what in the literature on $\lambda$-calculus is called a \emph{term context}, that is, a term $\TT C[\ ]$ containing one occurrence of the ``hole'' $[\ ]$, such that for any actual $\lambda$-term $t$, $\TT C[t]$ is also a $\lambda$-term (where $\TT C[\ ]$ might possibly bind variables which occur free in $t$).}: 
this means a derivation $\Pi[\Sigma^{?}]$ of conclusion $B$ and open assumptions $[\vec C], \vec B$ constructed starting from some unknown sub-derivation $\Sigma^{?}$ of conclusion $A$ and open assumptions $[\vec C], \vec A$, so that for any actual derivation $\Sigma: [\vec C], \vec A \vdash A$, by putting it in place of the unknown $\Sigma^{?}$ we obtain an actual derivation $\Pi[\Sigma]: [\vec C], \vec B \vdash B$:
%
%
%
%
\begin{center}
\adjustbox{scale=0.7}{\begin{minipage}{1.4\textwidth}
\begin{align}
\Pi[\Sigma^{?}] \quad = \quad
\AXC{$\stackrel{\vec r}{\vec C}$}
\AXC{$\stackrel{\vec s}{\vec A}$}
\noLine
\BIC{$\Sigma^{?}$}
\noLine
\UIC{$ A$}
\AXC{$\stackrel{\vec r}{\vec C}$}
\AXC{$\stackrel{\vec t}{\vec B}$}
\noLine
\TIC{$\Pi$}
\noLine
\UIC{$B$}
\DP
\end{align}\end{minipage}}\end{center}
Observe that the assumptions $\vec A$ need not be open in $\Pi[\Sigma^{?}]$, since they may be discharged by $\Pi[\Sigma^{?}]$.

In this way the identity arrow $1_{\vec A\vdash A}$ of $\SEQ_{\C S}(\vec C)$ corresponds to the derivation $\Pi[\Sigma^{?}]$ only constituted by the ``hole'' $\Sigma^{?}$, and the composition of two derivations with a hole $\Pi[\Sigma^{?}]$ and $\Pi'[\Sigma^{?}]$ is the result of replacing the hole of the second derivation with the first derivation, i.e.~$\Pi'[\Pi[\Sigma^{?}]]$.

\subparagraph{Functors and Natural Transformations}

Algebraic structures usually come with their own notion of \emph{homomorphism} (e.g.~group homomorphisms, ring homomorphisms etc.), and categories make no exception: homomorphisms between categories are called \emph{functors}; concretely, a functor $F: \C C\longrightarrow \C D$ between categories $\C C$ and $\C D$ is given by (1) a map associating any object $A$ of $\C C $ with an object $F(A)$ of $\C D$, and (2) maps $\C C(A,B)\longrightarrow \C D(F(A),F(B))$ associating any arrow $f:A\longrightarrow B$ in $\C C $ with an arrow $F(f):F(A)\longrightarrow F(B)$ in $\C D$, subject to expectable ``homomorphism'' conditions, i.e.~that $F$ preserves identities and composition, expressed by diagrams $F(1_{A})=1_{F(A)}$ and $F(g\circ f)=F(g)\circ F(f)$.

For instance, for any context $\vec C$, a functor $\C I^{\vec C}:\FORM_{\C S}(\vec C)\longrightarrow \SEQ_{\C S}(\vec C)$ can be defined by letting, for any formula $A$, $\C I^{\vec C}(A)$ be the sequent $\vdash  A$ and, for any derivation $\Pi: [\vec C], A \vdash B$ (corresponding to an arrow in $\FORM_{\C S}[\vec C]$), $\C I^{\vec C}(\Pi:[\vec C], A\vdash B)$ be the derivation obtained by cutting the axiom $\AXC{}\dashedLine\UIC{$[\vec C]\vdash  A$}\DP$ with the derivations $\Pi$:
\begin{center}
\adjustbox{center, scale=0.7}{
\begin{minipage}{1.4\textwidth}
\begin{align}
\C I^{\vec C}(\Pi) \quad = \quad 
\AXC{}
\dashedLine
\UIC{$[\vec C]\vdash A$}
\AXC{$\Pi$}
\noLine
\UIC{$[\vec C], A \vdash B$}
\RL{$cut$}
\BIC{$[\vec C] \vdash B$}
\DP
\end{align}\end{minipage}}
\end{center}
corresponding to an arrow in $\SEQ_{\C S}(\vec C)$.

To give a more semantic example, a functor $\C M: \FORM_{\C S} \longrightarrow \mathsf{Set}$ can be thought of as a \emph{denotational model} of $\C S$, following the functional interpretation of proofs: it associates each formula $A$ of $\C S$ with a set $\C M(A)$, and each proof $\Pi:A\vdash B$ with some function $\C M(\Pi):\C M(A)\longrightarrow \C M(B)$ between the associated sets.

A functor $F: \C C^{\mathsf{op}}\longrightarrow \C D$ is the same thing as a \emph{contravariant} functor from $\C C$ to $\C D$, i.e.~one whose direction of functoriality is reversed: an arrow $f\in \C C(A,B)$ is turned into an arrow $F(f)\in \C D(F(B),F(A))$. 
%

%

\begin{remark}\label{rem:multifunctor}
If $\C C$ is a category and $\C D$ is a multicategory (see Remark \ref{rem:multicat}), a functor 
 $F:\C C\longrightarrow \C D$ from $\C C$ to $\C D$ can be defined as if $\C D$ were a category: the image under $F$ of an arrow $f\in \C C(A,B)$ will be an arrow in $\C D$ with source the 1-object list $F(A)$ and target $F(B)$.
\end{remark}

In the next sections we'll see that several ``schematic'' logical expressions (e.g.~a formula depending on a first- or second-order variable, a rule depending on meta-variables for propositions or contexts) can be associated with suitable functors over the categories of formulae, contexts or sequents.

The last concept we must recall, actually the most important for us, is that of a natural transformation, i.e.~of an arrow between functors: suppose $F,G:\C C\longrightarrow \C D$ are functors between categories $\C C,\C D$; a natural transformation between $F,G$ (noted $\theta: F \To G $) is given by a family of morphisms in $\C D$ of the form $\theta_{A}: F(A)\longrightarrow G(A)$, indexed by the objects of $\C C$, and satisfying, for all $f\in \C C(A, B)$, the ``permutation'' condition $\theta_{B}\circ  F(f)= G(f)\circ \theta_{A}$, which in diagram form reads as the square below:
\begin{center}
\adjustbox{center, scale=0.7}{
\begin{minipage}{1.4\textwidth}
\begin{align}\label{eq:naturality}
\begin{tikzcd}[ampersand replacement=\&]
F(A)\ar{d}[left]{F(f)} \ar{rr}{\theta_{A}} \& \& G(A) \ar{d}{G(f)} \\
F(B) \ar{rr}[below]{\theta_{B}} \& \& G(B)
\end{tikzcd}
\end{align}\end{minipage}}
\end{center}
The basic example of natural transformation is the identity $1: F\To F$, with components the identity arrows $1_{A}:F(A)\longrightarrow F(A)$; moreover natural transformations $\theta: F\To G$ and $\eta: G\To H$ can be composed to obtain a natural transformation
$\eta\circ \theta: F\To H$ (where $(\eta\circ\theta)_{A}=\eta_{A}\circ \theta_{A}$). Indeed, the functors between two categories $\C C$ and $\C D$ and their natural transformations form themselves a category. 


\begin{remark}\label{rem:multinat}
If $\C C$ is a category, $\C D$ is a multicategory, and $F_{1},\dots, F_{n},G: \C C\longrightarrow \C D$ are functors (see Remarks \ref{rem:multicat} and \ref{rem:multifunctor}), it makes sense to speak of a natural transformation $\theta: F_{1},\dots, F_{n} \To G$: this is a family of multi-arrows $\theta_{A}: F_{1}(A),\dots, F_{n}(A)\longrightarrow G(A)$ satisfying a naturality condition $\theta_{B}\circ (F_{1}(f),\dots, F_{n}(f))= G(f)\circ \theta_{A}$, 
which in diagram form reads as the square below:
\begin{center}
\adjustbox{center, scale=0.7}{
\begin{minipage}{1.4\textwidth}
\begin{align}\label{eq:multinat}
\begin{tikzcd}[ampersand replacement=\&]
F_{1}(A),\dots, F_{n}(A)\ar{d}[left]{F_{1}(f),\dots, F_{n}(f)} \ar{rr}{\theta_{A}} \& \& G(A) \ar{d}{G(f)} \\
F_{1}(B),\dots, F_{n}(B) \ar{rr}[below]{\theta_{B}} \& \& G(B)
\end{tikzcd}
\end{align}\end{minipage}}
\end{center}

\end{remark}

Our goal in the next sections will be to convince the reader that several notions of identity of proofs can be expressed in an elegant way using permutation conditions like \eqref{eq:naturality} (or \eqref{eq:multinat}). The key insight will be that schematic rules or derivations  (i.e.~containing variables or meta-variables) can be interpreted as natural transformations in a suitable sense.

\section{Equivalence up to Free Permutations of Rules}\label{sec:permutations}

In this section we discuss a first naturality condition for sequent calculus derivations, which captures a criterion of identity of proofs based on a class of rule permutations. 

\subparagraph{Free Permutations of Rules}
From the viewpoint of identity of proofs, the choice between sequent calculus and natural deduction as our preferred formal system is not devoid of consequences. 
Indeed, as is well known, distinct sequent calculus derivations might correspond to the same natural deduction derivation.
For example, in sequent calculus one can construct two distinct (cut-free) derivations of $A,C\supset B, C\vdash A\land B$, as shown below
\begin{center}
\adjustbox{center, scale=0.7}{
\begin{minipage}{1.4\textwidth}
\begin{align}\label{der1}
\AXC{}
\RL{$ax$}
\UIC{$A\vdash A$}
\AXC{}
\RL{$ax$}
\UIC{$B\vdash B$}
\RL{$\land$R}
\BIC{$A,B \vdash A\land B$}
\AXC{}
\RL{$ax$}
\UIC{$C\vdash C$}
\RL{$\supset$L}
\BIC{$A,C\supset B, C\vdash A\land B$}
\DP
\qquad
\AXC{}
\RL{$ax$}
\UIC{$A\vdash A$}
\AXC{}
\RL{$ax$}
\UIC{$B\vdash B$}
\AXC{}
\RL{$ax$}
\UIC{$C\vdash C$}
\RL{$\supset$L}
\BIC{$C\supset B, C\vdash B$}
\RL{$\land$R}
\BIC{$A,C\supset B,C \vdash A\land B$}
\DP
\end{align}
\end{minipage}}
\end{center}

\noindent
By contrast, in natural deduction there is precisely one normal derivation of $A\land B$ on assumptions $A,C\supset B$ and $C$, shown below
\begin{center}
\adjustbox{center, scale=0.7}{
\begin{minipage}{1.4\textwidth}
\begin{align}\label{der11}
\AXC{$\stackrel{p}{A}$}
\AXC{$\stackrel{q}{C\supset B}$}
\AXC{$\stackrel{r}{C}$}
\RL{$\supset$E}
\BIC{$B$}
\RL{$\land$I}
\BIC{$A\land B$}
\DP
\end{align}\end{minipage}}\end{center}
Observe that the derivations in \eqref{der1} are made of the same rules ($ax$), ($\land$R), ($\supset$L), but these are applied in different order.
Let us say that two consecutive occurrences of rules in a sequent calculus derivation are \emph{freely permutable} when the derivation obtained by switching the order of the rules is correct and has the same conclusion.\footnote{In some cases, see \eqref{der4} in Section \eqref{sec:disjunction}, permuting a rule $R_{1}$ over a rule $R_{2}$ might lead to \emph{duplicate} the $R_{2}$ and the sub-derivations on top of it.} The rules ($\land$R) and ($\supset$L) in \eqref{der1} are freely permutable. 
We will see several examples of freely permutable rules in the following pages, the curious reader can find more examples concerning intuitionistic and linear logic in \citep{Strass2019}, p.~11 and in the appendix of  \citep{Hughes2018}.

Let us say that a formal system $\F F$ for some logic $\C S$ is \emph{canonical} when there exists a mapping $\Pi\mapsto \Pi^{\dag}$ from sequent calculus derivations into derivations in $\F F$ such that, whenever two derivations $\Pi$ and $\Sigma$ can be turned one into the other using a finite number of free permutations of rules, the derivations $\Pi^{\dag}$ and $\Sigma^{\dag}$ in $\F F$ coincide.

For the disjunction-free fragment of intuitionistic propositional logic, it is well-known that natural deduction  does provide a canonical formalism (as the example above suggested).

More generally, the search for {canonical} proof-systems has been a recurring theme of research in proof-theory, at least since the development of linear logic \emph{proof nets} \citep{LLSS}. 
However, finding a canonical formalism for a logic can be difficult. Things become delicate already when one adds disjunction (see Section \eqref{sec:disjunction}) or quantifiers (see Sections \eqref{sec:parametricity} and \eqref{sec:unification}) to intuitionistic propositional logic, or if one replaces intuitionistic logic by classical logic.

%

%
%
   Surveying the vast literature on canonical proof-systems certainly goes well beyond the goals of this paper, but we can mention two significant research directions: first, the replacement of natural deduction trees with either suitable notions of graphs (like \emph{proof nets} \citep{LLSS} for classical and intuitionistic \emph{linear} logic, or \emph{combinatorial proofs} \citep{Hughes2006, Strass2021} for full classical logic); secondly, the replacement of standard sequent calculi with \emph{focused} ones (e.g.~see \citep{Scherer2017} for intuitionistic logic with disjunction), i.e.~calculi with a more rigid discipline regulating the admissible ways of concatenating rules.


\subparagraph{Sequents as Functors, Rules as Natural Transformations}

A typical sequent calculus rule, as the one below
\begin{center}
\adjustbox{center, scale=0.7}{
\begin{minipage}{1.4\textwidth}
\begin{align}\label{eq:land}
\AXC{$\Gamma \vdash A$}
\AXC{$\Delta \vdash B$}
\RL{$\land$R}
\BIC{$\Gamma,\Delta\vdash A\land B$}
\DP
\end{align}\end{minipage}}\end{center}
employs (meta-)variables $\Gamma,\Delta,\dots$ for possible contexts (i.e.~lists of formulae). In fact, a concrete instance of ($\land$R) within a derivation would be something like
\begin{center}
\adjustbox{center, scale=0.7}{
\begin{minipage}{1.4\textwidth}
\begin{align}
\AXC{$\vdots$}
\noLine
\UIC{$\vec E \vdash A$}
\AXC{$\vdots$}
\noLine
\UIC{$\vec F \vdash B$}
\RL{$\land$R}
\BIC{$\vec E,\vec F\vdash A\land B$}
\noLine
\UIC{$\vdots$}
\DP
\end{align}\end{minipage}}\end{center}
where $\vec E=E_{1},\dots, E_{k}$ and $\vec F=F_{1},\dots, F_{h}$ are finite lists of actual formulae. Let us call the formulae $A$ and $B$ the \emph{active part} of (this instance of) ($\land$R), and the formulas $E_{1},\dots, E_{k}$ and $F_{1},\dots, F_{h}$ its \emph{inactive part}. Indeed, two consecutive instances of rules $R_{1},R_{2}$ are freely permutable precisely when the formulae in the active part of $R_{1}$ are inactive or absent in $R_{2}$ and the formulae in the active part of $R_{2}$ are inactive or absent in $R_{2}$.\footnote{Actually, a more refined criterion is needed when considering rules containing side conditions on the context (as usual rules for quantifiers or modal operators). } 

We will now show that the equivalence over derivations generated by free permutations of rules can be captured by a naturality condition. More precisely, by observing that the schematic sequents $\Gamma\vdash A$, $\Delta\vdash B$ and $\Gamma,\Delta \vdash A\land B$ occurring in a rule like \eqref{eq:land} yield suitable functors, and that the schematic rule ($\land$R) yields a natural transformations between such functors.

%
%
%

First, let us define derivations ``modulo free permutations'' formally. We will do this by defining a suitable quotient on the category of sequents:
 for any two sequents $\vec A \vdash A$ and $\vec B \vdash B$, let 
$\equiv^{\mathrm{perm}}_{\vec A,A,\vec B,B}$ be the smallest equivalence relations on 
the derivations in $\SEQ_{\C S}$ from $\vec A \vdash A$ to $\vec B \vdash B$ closed under $\equiv^{\mathrm{cut}}$ and free permutations of rules. 
Since composition in $\SEQ_{\C S}$ is given by $cut$ one can check that the equivalences $\equiv^{\mathrm{perm}}_{\vec A,A,\vec B,B}$ satisfy the congruence condition \eqref{condi}. 
We can define then the quotient category ${\SEQ_{\C S}}/{\equiv^{\mathrm{perm}}}$, and in a similar way the quotient multicategory ${\MSEQ_{\C S}}/{\equiv^{\mathrm{perm}}}$.

Now, let us check that sequent calculus rules yield functors whose image is in ${\SEQ_{\C S} }/ {\equiv^{\mathrm{perm}}}$ (and thus in ${\MSEQ_{\C S}}/{\equiv^{\mathrm{perm}}}$ too, see Remark \ref{rem:multifunctor}).

%
%

The schematic expression $\Gamma \vdash A$ gives rise to a contravariant functor
$\Phi_{A}(\Gamma):\CTX^{\mathsf{op}}_{\C S}\longrightarrow {\SEQ_{\C S}}/ {\equiv^{\mathrm{perm}}}$: with any list of formulae $\vec E$ it associates the sequent $\Phi_{A}(\vec E)=\vec E\vdash A$, and with any list of derivations $\vec\Pi:\vec E\vdash \vec F$ it associates a derivation $\Phi_{A}(\vec\Pi):(\vec F \vdash A) \longrightarrow (\vec E \vdash A)$ as shown below:
\begin{center}
\adjustbox{scale=0.7}{\begin{minipage}{1.4\textwidth}\begin{align}
\Phi_{A}(\vec \Pi)\quad = \quad 
\AXC{}
\dashedLine
\UIC{$\vec F=F_{1},\dots, F_{n} \vdash A$}
\AXC{$\Pi_{n}$}
\noLine
\UIC{$\vec E \vdash F_{n} $}
\RL{$cut$}
\BIC{$\vec E, F_{1},\dots, F_{n-1}\vdash A$} 
\noLine
\UIC{$\ddots$}
\noLine
\UIC{$\vec E,\dots, \vec E, F_{1}\vdash A$}
\AXC{$\Pi_{1}$}
\noLine
\UIC{$\vec E\vdash F_{1} $}
\RL{$cut$}
\BIC{$\vec E,\dots, \vec E\vdash A$}
\doubleLine
\RL{$c$}
\UIC{$\vec E \vdash A$}
\DP\end{align}\end{minipage}}\end{center}The schematic expression $\Delta \vdash B$ gives rise to a functor
$\Phi_{B}(\Delta):\CTX^{\mathsf{op}}_{\C S} \longrightarrow {\SEQ_{\C S} }/ {\equiv^{\mathrm{perm}}}$ defined in a similar way. Also the schematic expression $\Gamma,\Delta \vdash A\land B$ gives rise, for any choice of a context $\vec D$, to a functor $\Phi_{A\land B}(\vec D, \Delta): \CTX^{\mathsf{op}}_{\C S} \longrightarrow {\SEQ_{\C S} }/ {\equiv^{\mathrm{perm}}}$ (for simplicity, we only focus on the second variable $\Delta$ and provide a fixed value $\vec D$ for $\Gamma$) that, to any list $\vec E$, associates the sequent $\Phi_{A\land B}(\vec D, \vec E)=\vec D, \vec E\vdash A\land B$ and, to any list of derivations $\vec\Pi:\vec E \vdash \vec F$, associates a derivation $\Phi_{A\land B}(\vec D , \vec\Pi): (\vec D, \vec F\vdash A\land B)\longrightarrow (\vec D,\vec E\vdash A\land B)$ constructed similarly to the case of $\Phi_{A}$.

For example, if we let $\vec E=C\supset B,C$, $\vec F=B$ and 
$\Pi: \vec E \vdash \vec F$ be the derivation below
 \begin{center}
\adjustbox{scale=0.7}{\begin{minipage}{1.4\textwidth}\begin{align}\label{der2}
\Pi\quad = \quad
\AXC{}
\RL{$ax$}
\UIC{$B \vdash B$}
\AXC{}
\RL{$ax$}
\UIC{$C\vdash C$}
\RL{$\supset$L}
\BIC{$C\supset B, C \vdash B$}
\DP\end{align}\end{minipage}}\end{center}then the functorial derivation $\Phi_{A}(\Pi): (\vec F \vdash B)\longrightarrow (\vec E \vdash B)$ corresponds to the one below:
 \begin{center}
\adjustbox{scale=0.7}{\begin{minipage}{1.4\textwidth}\begin{align}
\Phi_{A}(\Pi)\quad = \quad
\AXC{}
\dashedLine
\UIC{$B \vdash B$}
\AXC{$\Pi$}
\noLine
\UIC{$C\supset B,C\vdash B$}
\RL{$cut$}
\BIC{$C\supset B,C\vdash B$}
\DP
\quad \equiv^{\mathrm{cut}}\quad
\AXC{}
\dashedLine
\UIC{$B \vdash B$}
\AXC{}
\RL{$ax$}
\UIC{$C\vdash C$}
\RL{$\supset$L}
\BIC{$C\supset B, C \vdash B$}
\DP\end{align}\end{minipage}}\end{center}
If we let $\vec D=A$, the functorial derivation $\Phi_{A\land B}(\vec D,\Pi): (\vec D, \vec F \vdash A\land B)\longrightarrow (\vec D, \vec E \vdash A\land B)$ is 
 \begin{center}
\adjustbox{scale=0.7}{\begin{minipage}{1.4\textwidth}\begin{align}
\Phi_{A\land B}(\B D,\Pi)\quad = \quad
\AXC{}
\dashedLine
\UIC{$A,B \vdash A\land B$}
\AXC{$\Pi$}
\noLine
\UIC{$C\supset B,C\vdash B$}
\RL{$cut$}
\BIC{$A,C\supset B,C\vdash A\land B$}
\DP
\quad \equiv^{\mathrm{cut}}\quad
\AXC{}
\dashedLine
\UIC{$A,B \vdash A\land B$}
\AXC{}
\RL{$ax$}
\UIC{$C\vdash C$}
\RL{$\supset$L}
\BIC{$A,C\supset B, C \vdash A\land  B$}
\DP\end{align}\end{minipage}}\end{center}

%
%
%
%
%
%

\begin{remark}\label{rem:twosided}
While we restricted ourselves here to sequents with contexts on the left, one can also consider one-sided sequents of the form $\vdash A, \Gamma$ (suitably adapting the definition of the categories $\SEQ_{\C S}(\vec D)$).
In this case the associated functor is of the form $\Phi(\Gamma):\CTX_{\C S} \longrightarrow \SEQ_{\C S} $ (notice that there is no opposite category in this case). Indeed, from a list of derivations $\vec\Pi:\vec E\longrightarrow \vec F$ one now obtains using $cut$ and $c$ a derivation of $(\vdash A, \vec E) \longrightarrow (\vdash A, \vec F)$.
By applying the same reasoning, one can also consider two-sided sequents of the form $\Gamma \vdash A, \Delta$, yielding two-variables functors 
$\Phi(\Gamma,\Delta):\CTX^{\mathsf{op}}_{S}\times \CTX_{S} \longrightarrow \SEQ_{\C S}/\equiv^{\mathrm{perm}}$, where the contravariant (i.e.~reversing) part corresponds to $\Gamma$ and the co-variant (i.e.~non-reversing) part corresponds to $\Delta$. 
\end{remark}

The rule ($\land$R) yields a way to pass from the schematic expressions $\Gamma \vdash A$ and $\Delta \vdash B$ to the schematic expression $\Gamma, \Delta \vdash A\land B$. 
Now, from the fact that we consider derivations modulo free permutations of rules, we will deduce that the rule ($\land$R) gives rise to a natural transformation 
$\C R: \Phi_{A}(\Gamma), \Phi_{B}(\Delta) \To \Phi_{A\land B}(\Gamma, \Delta)$.\footnote{Formally, we are here considering $\Phi_{A},\Phi_{B}$ and $\Phi_{A\land B}$ as functors onto the multicategory $\MSEQ_{\C S}/\equiv^{\mathrm{perm}}$, and using Remark \ref{rem:multinat}.}


First, we can observe that the rule ($\land$R) gives rise to multi-arrows $\C R_{\vec D, \vec E}:(\vec D\vdash A, \vec E \vdash B) \longrightarrow (\vec D,\vec E \vdash A\land B)$ in $\MSEQ_{\C S} $ (indexed by all possible lists of formulae $\vec D, \vec E$, i.e.~the possible values of the meta-variable $\Gamma$ and $\Delta$), where the derivation $\C R_{\vec D, \vec E}$ is made of an appropriate instance of ($\land$R):
\begin{center}
\adjustbox{scale=0.7}{\begin{minipage}{1.4\textwidth}\begin{align}
\C R_{\vec D,\vec E} \quad := \quad \AXC{}\dashedLine\UIC{$\vec D \vdash A$}
\AXC{}\dashedLine\UIC{$\vec E \vdash B$}
\RL{$\land$R}
\BIC{$\vec D,\vec E \vdash A\land B$}
\DP\end{align}\end{minipage}}\end{center}
Now, the requirement that this family be natural is expressed by an equational condition on the derivations $\C R_{\vec D,\vec E}$ which reads as follows:
given a list of derivations $\vec \Pi=\Pi_{1},\dots, \Pi_{n}: \vec E \longrightarrow \vec F$, the same proof is denoted by the derivation obtained by applying $\C R_{\vec D,\vec F}$ followed by the functorial derivation $\Phi_{A\land B}(\vec D, \vec\Pi)$, and by the derivation obtained by applying $\C R_{\vec D,\vec E}$ to the conclusions of the functorial derivation $\Phi_{B}(\vec \Pi)$, as illustrated below:

\begin{center}
\adjustbox{scale=0.7}{\begin{minipage}{1.4\textwidth}\begin{align}\label{natu1}
\AXC{}\dashedLine\UIC{$\vec D\vdash A$}
\AXC{}\dashedLine\UIC{$\vec F\vdash B$}
\RL{$\C R_{\vec D,\vec F}$}
\BIC{$\vec D,\vec F \vdash A\land B$}
\noLine
\UIC{\small$\Phi_{A\land B}(\vec D, \vec \Pi)$}
\noLine
\UIC{$\vec D,\vec E \vdash A\land B$}
\DP
\quad 
\equiv 
\quad
\AXC{}\dashedLine\UIC{$\vec D\vdash A$}
\AXC{}\dashedLine\UIC{$\vec F \vdash B$}
\noLine
\UIC{\small$\Phi_{B}(\vec \Pi)$}
\noLine
\UIC{$\vec E \vdash  B$}
\RL{$\C R_{\vec D,\vec E}$}
\BIC{$\vec D,\vec E \vdash A\land B$}
\DP\end{align}\end{minipage}}\end{center}In other words, applying an instance of the rule ($\land$R) with contexts $\Gamma=\vec D$ and $\Delta=\vec F$ followed by rules only acting on $\vec F$ (i.e.~for which the conclusion $A\land B$ is inactive) yields the same proof as applying an instance of the rule ($\land$R) with contexts $\Gamma=\vec D$ and $\Delta=\vec E$ \emph{after} this context is produced by rules for which the conclusions $A$ and $B$ are inactive.

If, as before, we let $\vec D=A$, $\vec E=C\supset B,C$ and $\vec F=B$, and we let $\vec \Pi$ be as in Eq.~\eqref{der2}, then the naturality condition \eqref{natu1}
precisely expresses the free permutation \eqref{der1}.

It is not difficult to be convinced that the instances of \eqref{natu1} express the equivalence of derivations up to free permutations of ($\land$R) with any sequence of other rules (those contained in $\Phi_{A\land B}(\vec D,\vec \Pi)$, which coincide with those in $\Phi_{B}(\vec \Pi)$). In other words, one can check that any instance of \eqref{natu1} can be deduced by applying a finite number of instances of $\equiv^{\mathrm{perm}}$ (and possibly of $\equiv^{\mathrm{cut}}$), leading to the following:
\begin{proposition}\label{prop1}
All equations \eqref{natu1} hold in ${\MSEQ_{\C S} }/ {\equiv^{\mathrm{perm}}}$. Thus, $\C R_{\vec D, \vec E}: \Phi_{A}(\Gamma),\Phi_{B}(\Gamma)\To \Phi_{A\land B}(\Gamma)$ is a natural transformation.
\end{proposition}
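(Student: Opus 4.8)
The plan is to establish, for every choice of contexts $\vec D,\vec E,\vec F$ and of a list of derivations $\vec\Pi=\Pi_{1},\dots,\Pi_{n}\colon\vec E\vdash\vec F$, that the two derivations displayed in \eqref{natu1} become equal in ${\MSEQ_{\C S}}/{\equiv^{\mathrm{perm}}}$; since these equations are exactly the naturality squares (in the sense of Remark \ref{rem:multinat}) for the family $\C R$ with respect to the morphisms $\vec\Pi$ of $\CTX^{\mathsf{op}}_{\C S}$, this suffices. First I would unfold both sides according to the definition of composition in $\SEQ_{\C S}$ (substitution of dashed axioms, i.e.\ $cut$): the left-hand side becomes the instance of ($\land$R) producing $\vec D,\vec F\vdash A\land B$ from the axioms $\vec D\vdash A$ and $\vec F\vdash B$, followed by the block of rules constituting $\Phi_{A\land B}(\vec D,\vec\Pi)$, while the right-hand side becomes the block of rules constituting $\Phi_{B}(\vec\Pi)$, applied to the axiom $\vec F\vdash B$, followed by that same instance of ($\land$R). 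By the explicit description of the functorial action on arrows, in both cases this ``block'' consists only of $cut$ rules --- one against each $\Pi_{i}$, on the cut-formula $F_{i}\in\vec F$ --- together with a sequence of contraction rules ($c$) merging the resulting copies of $\vec E$; and the block inside $\Phi_{A\land B}(\vec D,\vec\Pi)$ uses the very same $\Pi_{i}$'s and the very same pattern of cuts and contractions as the block inside $\Phi_{B}(\vec\Pi)$. Hence the two sides of \eqref{natu1} differ only in the position of the single instance of ($\land$R) relative to this block.

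The key step is then an elementary permutability observation: that instance of ($\land$R) is freely permutable with each rule of the block, so it can be pushed downwards past the whole block one rule at a time. Its active part is $\{A,B\}$ (together with the principal formula $A\land B$), and none of these formulas is the cut-formula of any cut in the block (those are the $F_{i}$, which belong to the inactive part of ($\land$R)) nor among the formulas contracted by a $c$-rule (those come from $\vec E$); conversely all formulas in the active part of a block-rule lie among $\vec F$ and $\vec E$, hence in the inactive part of ($\land$R). So each adjacent transposition of ($\land$R) with a block-rule is, by definition, a free permutation of rules, hence an instance of $\equiv^{\mathrm{perm}}$; and no duplication of subderivations occurs here --- the phenomenon mentioned earlier for general free permutations does not arise --- because each cut-formula $F_{i}$ occurs only in the second premise $\vec F\vdash B$ of ($\land$R), so when ($\land$R) is permuted below such a cut the cut simply migrates onto that premise, and likewise the contractions act only on the copies of $\vec E$ sitting on the second-premise side.

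To conclude I would argue by induction on the number of rules in the block. Starting from the left-hand side of \eqref{natu1}, repeatedly transpose the instance of ($\land$R) with the rule immediately below it; by the previous step each transposition lies in $\equiv^{\mathrm{perm}}$, and since $\equiv^{\mathrm{perm}}$ is an equivalence relation satisfying the congruence condition \eqref{condi}, the derivation produced at each stage is $\equiv^{\mathrm{perm}}$-equivalent to the one before. Once every block-rule has been moved above the second premise of ($\land$R), those rules have been reassembled into exactly the derivation $\Phi_{B}(\vec\Pi)$ turning the axiom $\vec F\vdash B$ into $\vec E\vdash B$, and what is left below is ($\land$R) applied to $\vec D\vdash A$ and $\vec E\vdash B$, i.e.\ $\C R_{\vec D,\vec E}$; up to the bookkeeping of contractions, which is absorbed by $\equiv^{\mathrm{cut}}\subseteq\equiv^{\mathrm{perm}}$, this is precisely the right-hand side of \eqref{natu1}. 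This yields all equations \eqref{natu1} in ${\MSEQ_{\C S}}/{\equiv^{\mathrm{perm}}}$, and hence that $\C R$ is a natural transformation (taking $\vec D=A$, $\vec E=C\supset B,C$, $\vec F=B$ recovers the free permutation \eqref{der1} as a special case). The main obstacle is the bookkeeping: one has to keep precise track of the copies of $\vec E$ created by the successive cuts and of the contractions that eliminate them, so as to check that the free-permutability criterion genuinely applies at each individual step and that the block reassembled on the second premise coincides on the nose with $\Phi_{B}(\vec\Pi)$.
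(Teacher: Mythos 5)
Your proposal is correct and takes essentially the same route as the paper: the paper justifies Proposition \ref{prop1} precisely by noting that $\Phi_{A\land B}(\vec D,\vec\Pi)$ and $\Phi_{B}(\vec\Pi)$ are built from the same cuts and contractions, so each instance of \eqref{natu1} follows from finitely many free permutations of ($\land$R) with those rules, possibly together with $\equiv^{\mathrm{cut}}$. You simply make explicit the stepwise transpositions and contraction bookkeeping that the paper leaves as an ``it is not difficult to be convinced'' remark.
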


On the one hand, using Proposition \ref{prop1} we can say that the interpretation of sequent calculus rules as natural transformations is sound when derivations are considered up to free permutations of rules.
On the other hand, the permutation of ($\land$R) with an instance of some schematic rule 
\begin{center}
\adjustbox{scale=0.7}{\begin{minipage}{1.4\textwidth}\begin{align}
\AXC{$\Gamma, F_{1},\dots, F_{n}\vdash C$}
\RL{\small Rule}
\UIC{$\Gamma, E_{1},\dots, E_{n'}\vdash C$}
\DP
\end{align}\end{minipage}}\end{center}
for which $\Gamma$ and $C$ are inactive can be seen as an instance of \eqref{natu1}:
\begin{center}
\adjustbox{scale=0.7}{\begin{minipage}{1.4\textwidth}\begin{align}
\AXC{}\dashedLine\UIC{$\vec D\vdash A$}
\AXC{}\dashedLine\UIC{$\vec F\vdash B$}
\RL{$\C R_{\vec D,\vec F}$}
\BIC{$\vec D,\vec F \vdash A\land B$}
\RL{\small Rule ($\Gamma=\vec D$, $C=A\land B$)}
\UIC{$\vec D,\vec E \vdash A\land B$}
\DP
\quad 
\equiv 
\quad
\AXC{}\dashedLine\UIC{$\vec D\vdash A$}
\AXC{}\dashedLine\UIC{$\vec F \vdash B$}
\RL{\small Rule ($\Gamma$ empty, $C=B$)}
\UIC{$\vec E \vdash  B$}
\RL{$\C R_{\vec D,\vec E}$}
\BIC{$\vec D,\vec E \vdash A\land B$}
\DP\end{align}\end{minipage}}\end{center}

In this way we can thus argue that the full equivalence $\equiv^{\mathrm{perm}}$ is generated by stipulating naturality conditions in the style \eqref{natu1} for all rules of the calculus (see Remark below).
In other words, stipulating that sequent calculus rules yield natural transformations amounts at precisely the same as asserting that derivations only differing by free permutations of rules denote the same proof.

\section{Generalized Elimination Rules and Permutative Conversions}\label{sec:disjunction}

As recalled in the last section, for the disjunction-free fragment of propositional intuitionistic logic, natural deduction provides a canonical representation of proofs: distinct sequent calculus derivations which only differ by free permutations of rules are represented by the same natural deduction derivation.
What is the problem with disjunction, then? 

\subparagraph{Disjunction-Elimination and Permutative Conversions}

The failure of canonicity for natural deduction can be ascribed to the disjunction-elimination rule ($\lor$E) 
\begin{center}
\adjustbox{scale=0.7}{
\begin{minipage}{1.4\textwidth}
\begin{align}\label{velim}
\AXC{$A\lor B$}
\AXC{$\stackrel{p}{A}$}
\noLine
\UIC{$C$}
\AXC{$\stackrel{q}{B}$}
\noLine
\UIC{$C$}
\RL{$\lor$E $(p,q)$}
\TIC{$C$}
\DP\end{align}\end{minipage}}\end{center}
To see why, consider the two sequent calculus derivation schemas below, which are related by a free permutation of the rules ($\lor$L) and ($\land$R):\footnote{Notice that the permutation, read from left to right, duplicates the sub-derivation $\Sigma$).}\begin{center}
\adjustbox{scale=0.7}{
\begin{minipage}{1.4\textwidth}
\begin{align}\label{der4}
\AXC{$\Pi_{1}$}
\noLine
\UIC{$ A \vdash C$}
\AXC{$\Pi_{2}$}
\noLine
\UIC{$ B \vdash C$}
\RL{$\lor$L}
\BIC{$A\lor B \vdash C$}
\AXC{$\Sigma$}
\noLine
\UIC{$\vdash D$}
\RL{$\land$R}
\BIC{$A\lor B \vdash C\land D$}
\DP
\qquad \qquad
\AXC{$\Pi_{1}$}
\noLine
\UIC{$ A \vdash C$}
\AXC{$\Sigma$}
\noLine
\UIC{$\vdash D$}
\RL{$\land$R}
\BIC{$A \vdash C\land D$}
\AXC{$\Pi_{2}$}
\noLine
\UIC{$ B \vdash C$}
\AXC{$\Sigma$}
\noLine
\UIC{$\vdash D$}
\RL{$\land$R}
\BIC{$ B \vdash C\land D$}
\RL{$\lor$L}
\BIC{$A\lor B \vdash C\land D$}
\DP\end{align}\end{minipage}}\end{center}
When we transpose these schemas in natural deduction (with ($\lor$L) translating as ($\lor$E) and ($\land$R) translating as ($\land$I)), we still obtain two distinct derivations, since one can still choose between applying ($\land$I) before or after ($\lor$E):
\begin{center}
\adjustbox{scale=0.7}{
\begin{minipage}{1.4\textwidth}
\begin{align}\label{der5}
\AXC{$\stackrel{s}{A\lor B}$}
\AXC{$\stackrel{p}{A}$}
\noLine
\UIC{$\Pi_{1}^{*}$}
\noLine
\UIC{$  C$}
\AXC{$\stackrel{q}{B}$}
\noLine
\UIC{$\Pi_{2}^{*}$}
\noLine
\UIC{$ C$}
\RL{$\lor$E $(p,q)$}
\TIC{$C$}
\AXC{$\Sigma^{*}$}
\noLine
\UIC{$ D$}
\RL{$\land$I}
\BIC{$ C\land D$}
\DP
\qquad \qquad
\AXC{$\stackrel{s}{A\lor B}$}
\AXC{$\stackrel{p}{A}$}
\noLine
\UIC{$\Pi_{1}^{*}$}
\noLine
\UIC{$  C$}
\AXC{$\Sigma^{*}$}
\noLine
\UIC{$ D$}
\RL{$\land$I}
\BIC{$ C\land D$}
\AXC{$\stackrel{q}{B}$}
\noLine
\UIC{$\Pi_{2}^{*}$}
\noLine
\UIC{$ C$}
\AXC{$\Sigma^{*}$}
\noLine
\UIC{$ D$}
\RL{$\land$I}
\BIC{$ C\land D$}
\RL{$\lor$E $(p,q)$}
\TIC{$C\land D$}
\DP\end{align}\end{minipage}}\end{center}

Hence, in presence of the rule ($\lor$E), free permutations of rules have to be considered also for natural deduction derivations. 

A well-studied special case of this problem is that of permutations between ($\lor$E) and \emph{elimination} rules\footnote{More precisely, of elimination rules whose \emph{major} premiss is the conclusion of ($\lor$E).}: in this case the free permutation (oriented from left to right as below) is generally called a \emph{permutative conversion} \citep{Prawitz1971}: 
 \begin{center}
\adjustbox{scale=0.7}{
\begin{minipage}{1.4\textwidth}
\begin{align}\label{der6}
\AXC{$A\lor B$}
\AXC{$\stackrel{p}{A}$}
\noLine
\UIC{$\Pi_{1}$}
\noLine
\UIC{$  C$}
\AXC{$\stackrel{q}{B}$}
\noLine
\UIC{$\Pi_{2}$}
\noLine
\UIC{$ C$}
\RL{$\lor$E $(p,q)$}
\TIC{$C$}
\RL{\small elimination rule}
\UIC{$ C'$}
\DP
\qquad \leadsto \qquad
\AXC{$A\lor B$}
\AXC{$\stackrel{p}{A}$}
\noLine
\UIC{$\Pi_{1}$}
\noLine
\UIC{$  C$}
\RL{\small elimination rule}
\UIC{$ C'$}
\AXC{$\stackrel{q}{B}$}
\noLine
\UIC{$\Pi_{2}$}
\noLine
\UIC{$ C$}
\RL{\small elimination rule}
\UIC{$ C'$}
\RL{$\lor$E $(p,q)$}
\TIC{$C'$}
\DP\end{align}\end{minipage}}\end{center}Permutative conversions are necessary to extend to the case of disjunction the usual (desirable) fact that derivations in normal form enjoy the \emph{subformula property}. This is the property by which any formula occurring in the derivation is either a subformula of the conclusion or of some of the open assumptions. For instance, without appeal to permutative conversions, the derivation below left would be in normal form and would fail to satisfy the subformula property (since the formula $A\land C$ is a subformula of neither of the premises nor of the conclusion of the derivation). Instead, in presence of permutative conversions, this derivation is not normal and, by applying a permutative conversion followed by two $\beta$-conversions on the peaks ($\land$I)-($\land$E$_{1}$), one can reduce it to the derivation below right, which does satisfy the subformula property.
 \begin{center}
\adjustbox{scale=0.7}{
\begin{minipage}{1.4\textwidth}
\begin{align}
\AXC{$A\lor A$}
\AXC{$\stackrel{p}{A}$}
\AXC{$\stackrel{s}{C}$}
\RL{$\land$I}
\BIC{$A\land C$}
\AXC{$\stackrel{q}{A}$}
\AXC{$\stackrel{s}{C}$}
\RL{$\land$I}
\BIC{$A\land C$}
\RL{$\lor$E $(p,q)$}
\TIC{$A\land C$}
\RL{$\land$E$_{1}$}
\UIC{$A$}
\DP
\qquad
\qquad
\AXC{$A\lor A$}
\AXC{$\stackrel{p}{A}$}
\AXC{$\stackrel{q}{A}$}
\RL{$\lor$E $(p,q)$}
\TIC{$A$}
\DP\end{align}\end{minipage}}\end{center}

The usual property that normal forms exist and are unique is preserved
when permutative conversions are added to standard $\beta$ and $\eta$-conversions (see \citep{Prawitz1971}).  
However, this property is lost if, rather than just permutations of ($\lor$E) with elimination rules, one admits permutations of ($\lor$E) with \emph{any} any other rule (see \citep{Lindley2007}). 

What about the identity of proofs relation induced by these permutations (plus $\beta$- and $\eta$-conversions)? It is decidable, and if we restrict ourselves to standard permutative conversions, decidability is a simple consequence of existence and unicity of normal forms. Instead, if we consider all permutations, decidability becomes much more challenging to establish, and a complete proof was only published recently \citep{Scherer2017}, using the technique of \emph{focusing}.

The rule ($\lor$E) is one example from a larger family of natural deduction rules, called \emph{generalized elimination rules} \citep{Plato2001}, whose study has a long tradition in proof-theory.  
These rules are characterized by the presence of a meta-variable $C$ for an arbitrary possible conclusion occurring both as the conclusion of the rule and as the conclusion of some minor premises of the rule.
Other examples of generalized elimination rules, which can be seen as alternatives for usual elimination rules for $\land$ and $\supset$, are illustrated below,
 \begin{center}
\adjustbox{scale=0.7}{
\begin{minipage}{1.4\textwidth}
\begin{align}
\AXC{$A\land B$}
\AXC{$\stackrel{p}{A},\stackrel{q}{B}$}
\noLine
\UIC{$  C$}
\RL{$\land$E$_{\mathrm{gen}}$ $(p,q)$}
\BIC{$C$}
\DP
\qquad\qquad\qquad 
\AXC{$A\supset B$}
\AXC{$A$}
\AXC{$\stackrel{p}{B}$}
\noLine
\UIC{$  C$}
\RL{$\supset$E$_{\mathrm{gen}}$ $(p)$}
\TIC{$C$}
\DP\end{align}\end{minipage}}\end{center}
All generalized elimination rules give rise to problems similar to ($\lor$E), i.e.~free permutation of rules and the need of permutative conversions to preserve the subformula property. For the sake of readability, we here restrict our discussion to the case of ($\lor$E), but the analysis of permutations we develop in the following lines can be easily adapted to other generalized elimination rules.

\subparagraph{Sequents as Functors, Generalized Rules as Natural Transformations}


As we observed, the main feature of ($\lor$E) is that it makes use of a meta-variable $C$, which stands for an arbitrary conclusion. This meta-variable plays a role very similar to the context meta-variables $\Gamma,\Delta$ occurring in sequent calculus rules. We can then expect that the principle that two derivations differing by a free permutation involving ($\lor$E) denote the same proof can be 
expressed as a naturality condition relying on the schematic nature of $C$. We will show that this is actually the case.

In sequent calculus we can capture permutations like \eqref{der4} following the analysis from the previous section: focusing on the meta-variable $C$ in the rule ($\lor$L) 
 \begin{center}
\adjustbox{scale=0.7}{
\begin{minipage}{1.4\textwidth}
\begin{align}
\AXC{$\Gamma, A \vdash C$}
\AXC{$\Gamma, B \vdash C$}
\RL{$\lor$L}
\BIC{$\Gamma, A\lor B\vdash C$}
\DP
\end{align}
\end{minipage}}
\end{center}
the expressions $\Gamma, A\vdash C,\Gamma, B\vdash C$ and $\Gamma, A\lor B\vdash C$ yield (for any fixed choice $\Gamma=\vec D$) functors $\Phi_{A}(\vec D,\_), \Phi_{B}(\vec D,\_), \Phi_{A\lor B}(\vec D,\_): \CTX_{\C S}\longrightarrow \mathsf{(Multi)Sequents}_{\C S}/\equiv^{\mathrm{Perm}}$,
where $\Phi_{A}(\vec D, C)= \vec D, A\vdash C$, $\Phi_{B}(\vec D, C)=\vec D, B\vdash C$ and $\Phi_{A\lor B}(\vec D, C)=\vec D, A\lor B \vdash C$. Moreover, the rule ($\lor$L) itself yields a natural transformation
$\C S_{\vec D, \_}:\Phi_{A}(\vec D,\_), \Phi_{B}(\vec D,\_)\To \Phi_{A\lor B}(\vec D,\_)$ where
$\C S_{\vec D,C}$ is formed by an appropriate instance of ($\lor$L): \begin{center}
\adjustbox{scale=0.7}{
\begin{minipage}{1.4\textwidth}
\begin{align}
\C S_{\vec D,C} \quad := \quad
\AXC{}
\dashedLine
\UIC{$\vec D, A \vdash C$}
\AXC{}
\dashedLine
\UIC{$\vec D, B \vdash C$}
\RL{$\lor$L}
\BIC{$\vec D, A\lor B\vdash C$}
\DP
\end{align}
\end{minipage}}
\end{center}
The naturality condition for $\C S_{\vec D, C}$ reads as the permutation below, for any derivation $\Pi: E\vdash F$
\begin{center}
\adjustbox{scale=0.7}{\begin{minipage}{1.4\textwidth}\begin{align}\label{natu1bis}
\AXC{$\vec D, A\vdash E$}
\AXC{$\vec D, B\vdash E$}
\RL{$\C S_{\vec D,E}$}
\BIC{$\vec D,A \lor B \vdash E$}
\noLine
\UIC{\small$\Phi_{A\lor B}(\vec D, \Pi)$}
\noLine
\UIC{$\vec D,A\lor B \vdash F$}
\DP
\quad 
\equiv 
\quad
\AXC{$\vec D,A\vdash E$}
\noLine
\UIC{\small$\Phi_{A}(\vec D,\Pi)$}
\noLine
\UIC{$\vec D, A \vdash  E$}
\AXC{$\vec D, B \vdash E$}
\noLine
\UIC{\small$\Phi_{B}(\vec D,\Pi)$}
\noLine
\UIC{$\vec D, B \vdash  F$}
\RL{$\C S_{\vec D,F}$}
\BIC{$\vec D,A\lor B \vdash F$}
\DP\end{align}\end{minipage}}\end{center}
of which \eqref{der4} can be seen as an instance.

By reasoning as in the previous section, and considering that $\Phi_{A\lor B}(\vec D, \Pi)$ is composed of the same sequence of rules as $\Phi_{A}(\vec D, \Pi)$ and $\Phi_{B}(\vec D, \Pi)$, we can deduce that the permutation \eqref{natu1bis}, for any possible $\Pi$, is obtained by a sequence of free permutations of rules with ($\lor$L). This leads to the following:

\begin{proposition}\label{prop:natu1bis}
All equations \eqref{natu1bis} hold in ${\MSEQ_{\C S} }/ {\equiv^{\mathrm{perm}}}$. Thus, $\C S_{\vec D, \_}: \Phi_{A}(\vec D,\_),\Phi_{B}(\vec D,\_)\To \Phi_{A\lor B}(\vec D,\_)$ is a natural transformation.
\end{proposition}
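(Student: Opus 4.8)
The plan is to argue exactly as for Proposition~\ref{prop1}, with the meta-variable $C$ in $(\lor L)$ now playing the role that the context meta-variables $\Gamma,\Delta$ played there: I will unfold both sides of \eqref{natu1bis} into concrete derivations in $\MSEQ_{\C S}$ and exhibit a finite chain of free permutations (together with a few $\equiv^{\mathrm{cut}}$-steps) linking them. First I would spell out the functorial derivations: for $X\in\{A,B,A\lor B\}$ the arrow $\Phi_X(\vec D,\Pi)\colon(\vec D,X\vdash E)\longrightarrow(\vec D,X\vdash F)$ is, up to $\equiv^{\mathrm{cut}}$, obtained by cutting the axiom slot $\vec D,X\vdash E$ against $\Pi\colon E\vdash F$; since here the functorial variable is a single succedent formula that $\Pi$ isolates on the left, no contraction is needed, and the three derivations $\Phi_A(\vec D,\Pi)$, $\Phi_B(\vec D,\Pi)$, $\Phi_{A\lor B}(\vec D,\Pi)$ consist of the \emph{same} sequence of rules — those of $\Pi$ together with one $cut$ — differing only in the passive antecedent formula $X$. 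Composing in $\SEQ_{\C S}$, the left-hand side of \eqref{natu1bis} first applies one instance of $(\lor L)$ (active formulas $A$, $B$, $A\lor B$, all on the left) and then the rules of $\Phi_{A\lor B}(\vec D,\Pi)$ (active formulas all descendants of $E$, of the cut formula, and of $F$, hence all on the right), while the right-hand side applies two copies of the rules of $\Phi_X(\vec D,\Pi)$, one over each premise, and then the same instance of $(\lor L)$.

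Next I would push that $(\lor L)$-instance downwards past the rules of $\Phi_{A\lor B}(\vec D,\Pi)$ one at a time. Each step is a legitimate free permutation: the formula threaded through those rules (first $E$, then the cut formula, then $F$) is never $A$, $B$ or $A\lor B$, so these formulas are inactive in every rule of $\Phi_X(\vec D,\Pi)$ and, symmetrically, $E$, $F$ and the cut formula are inactive in $(\lor L)$. As in \eqref{der4} (and the footnote there), crossing a rule with the two-premise rule $(\lor L)$ duplicates that rule into the two $\lor$-branches, which is precisely why the right-hand side of \eqref{natu1bis} carries two copies of $\Phi_X(\vec D,\Pi)$. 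A straightforward induction on the number of rules of $\Phi_{A\lor B}(\vec D,\Pi)$ (equivalently, on $\Pi$) therefore converts the left-hand side of \eqref{natu1bis} into its right-hand side, modulo a few $\equiv^{\mathrm{cut}}$-normalizations of the one-step diagrams. Hence every instance of \eqref{natu1bis} holds in $\MSEQ_{\C S}/\equiv^{\mathrm{perm}}$, and — since we already know (from the discussion preceding the statement) that the $\Phi$'s are functors and that $\C S_{\vec D,C}$ defines the required family of multi-arrows — Remark~\ref{rem:multinat} lets us read this as the assertion that $\C S_{\vec D,\_}\colon\Phi_A(\vec D,\_),\Phi_B(\vec D,\_)\To\Phi_{A\lor B}(\vec D,\_)$ is a natural transformation.

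The one delicate point is the bookkeeping around duplication: permuting $(\lor L)$ below a rule $R$ that acts on the right requires copying $R$ — and the entire subderivation sitting above the relevant branch of $R$ — into both premises of $(\lor L)$, and one has to check that the result is still a well-formed derivation and that this move is genuinely licensed by the definition of $\equiv^{\mathrm{perm}}$ (which, by hypothesis, is closed under free permutations of rules, duplications included). Everything else is routine, and in fact slightly lighter than in Proposition~\ref{prop1}, since here $\Phi_X(\vec D,\Pi)$ collapses up to $\equiv^{\mathrm{cut}}$ to a single $cut$.
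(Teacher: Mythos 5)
Your proposal is correct and follows essentially the same route as the paper: the paper's justification is precisely the observation that $\Phi_{A}(\vec D,\Pi)$, $\Phi_{B}(\vec D,\Pi)$ and $\Phi_{A\lor B}(\vec D,\Pi)$ consist of the same sequence of rules, so that each instance of \eqref{natu1bis} is obtained by a finite chain of free permutations of $(\lor L)$ with those rules (duplication included, as in \eqref{der4}), exactly as in Proposition~\ref{prop1}. You merely spell out the induction and the duplication bookkeeping that the paper leaves implicit.
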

%

On the other hand, the permutation of ($\lor$L) with an instance of some rule 
\begin{center}
\adjustbox{scale=0.7}{\begin{minipage}{1.4\textwidth}\begin{align}
\AXC{$\Gamma\vdash E$}
\RL{\small Rule}
\UIC{$\Gamma \vdash F$}
\DP
\end{align}\end{minipage}}\end{center}
for which $\Gamma$ is inactive can be seen as an instance of \eqref{natu1bis}:
\begin{center}
\adjustbox{scale=0.7}{\begin{minipage}{1.4\textwidth}\begin{align}\label{natuzz}
\AXC{}\dashedLine\UIC{$\vec D,A\vdash E$}
\AXC{}\dashedLine\UIC{$\vec D,B\vdash E$}
\RL{$\C S_{\vec D,E}$}
\BIC{$\vec D,A\lor B \vdash E$}
\RL{\small Rule ($\Gamma=\vec D,A\lor B$)}
\UIC{$\vec D,A\lor B \vdash F$}
\DP
\quad 
\equiv 
\quad
\AXC{}\dashedLine\UIC{$\vec D,A\vdash E$}
\RL{\small Rule ($\Gamma=\vec D,A$)}
\UIC{$\vec D,A\vdash F$}
\AXC{}\dashedLine\UIC{$\vec D,B \vdash E$}
\RL{\small Rule ($\Gamma=\vec D,B$)}
\UIC{$\vec D,B\vdash F$}
\RL{$\C S_{\vec D,F}$}
\BIC{$\vec D,A\lor B \vdash F$}
\DP\end{align}\end{minipage}}\end{center}
Hence, stipulating the naturality condition \eqref{natu1bis} to hold amounts at the same as asserting that derivations only differing by free permutations of rules with ($\lor$L) denote the same proof.

\subparagraph{Permutations in Natural Deduction, directly}
We will now discuss how permutations like \eqref{der5} or \eqref{der6} can be described by reasoning directly in natural deduction. For this we will exploit the representation of natural deduction derivations ``with holes''
as arrows of the category $\MSEQ_{\C S}$ described in Section \ref{sec:categories}.

The idea is that an instance of the rule ($\lor$E), with conclusion $D$, can be interpreted as a derivation $\Pi_{C}[\Sigma^{?},\Xi^{?}]: A\lor B \vdash C$ with two holes corresponding to unknown derivations of $C$ from assumptions $A$ and $B$, respectively: 
\begin{center}
\adjustbox{scale=0.7}{\begin{minipage}{1.4\textwidth}\begin{align}\label{velim2}
\Pi_{C}[\Sigma^{?},\Xi^{?}] \quad := \quad 
\AXC{$A\lor B$}
\AXC{$\stackrel{p}{A}$}
\noLine
\UIC{$\Sigma^{?}$}
\noLine
\UIC{$C$}
\AXC{$\stackrel{q}{B}$}
\noLine
\UIC{$\Xi^{?}$}
\noLine
\UIC{$C$}
\RL{$\lor$E $(p,q)$}
\TIC{$C$}
\DP
\end{align}\end{minipage}}\end{center}

The family of derivations $\Pi_{C}[\Sigma^{?},\Xi^{?}]$, indexed by all formulas $C$, yields then a natural transformation $\Pi_{C}[\Sigma^{?},\Xi^{?}]: \C K_{A\lor B}^{\vec D}(C), \Psi_{A}^{\vec D}(C),\Psi_{B}^{\vec D}(C) \longrightarrow \C I^{\vec D}(C)$, where the functors $\C K^{\vec D}_{A\lor B}(C), \Psi^{\vec D}_{A}(C),\Psi^{\vec D}_{B}(C), \C I^{\vec D}: \FORM_{\C S}(\vec D)\longrightarrow \mathsf{(Multi)Sequents}_{\C S}(\vec D)/\equiv^{\mathrm{Perm}}$ are defined as follows:
\begin{itemize}
\item $\C K^{\vec D}_{A\lor B}$ is the constant functor given by $\C K^{\vec D}_{A\lor B}(C)=\vec D\vdash A\lor B$ and $\C K^{\vec D}_{A\lor B}(\Lambda)=\AXC{}\dashedLine\UIC{${\vec D}\vdash A\lor B$}\DP$
(which in natural deduction form corresponds to the derivation only made of the assumption $A\lor B$);

\item $\Psi^{\vec D}_{A}$ is given by $\Psi^{\vec D}_{A}(C)=\vec D,A \vdash C$ and, for $\Lambda: \vec D,E\vdash F$, by the derivation $\Psi_{A}^{\vec D}(\Pi): ([\vec D],A\vdash E)\longrightarrow ([\vec D],A\vdash F)$ below (where we illustrate on the right its representation as a natural deduction derivation with a hole $\Sigma^{?}$)
\begin{center}
\adjustbox{scale=0.7}{\begin{minipage}{1.15\textwidth}\begin{align}\label{fex}
\Psi_{A}^{\vec D}(\Lambda) \quad = \quad 
\AXC{}
\dashedLine
\UIC{$\vec D,A\vdash E$}
\AXC{$\Lambda$}
\noLine
\UIC{$\vec D, E\vdash F$}
\RL{$cut$}
\BIC{$\vec D,A\vdash F$}
\DP
\qquad \approx \qquad
\left(
\AXC{$\vec D$}
\AXC{$\vec D,A$}
\noLine
\UIC{$\Sigma^{?}$}
\noLine
\UIC{$E$}
\noLine
\BIC{$\Lambda$}
\noLine
\UIC{$F$}
\DP\right)
\end{align}\end{minipage}}\end{center}
\item $\Psi^{\vec D}_{B}$ is given by $\Psi_{B}^{\vec D}(C)=\vec D,B \vdash C$ and, for $\Lambda: E\vdash F$, by the derivation $\Psi_{B}^{\vec D}(\Lambda): ([\vec D],B\vdash E)\longrightarrow ([\vec D],B\vdash F)$ constructed similarly to \eqref{fex};

\item $\C I^{\vec D}$ is the functor described in Section \ref{sec:categories} with $\C I^{\vec D}(C)=\vec D\vdash C$ and 
$\C I^{\vec D}(\Lambda): ([\vec D]\vdash E)\longrightarrow ([\vec D]\vdash F)$ being as below
\begin{center}
\adjustbox{scale=0.7}{\begin{minipage}{1.15\textwidth}\begin{align}\label{fexx}
\C I^{\vec D}(\Lambda) \quad = \quad 
\AXC{}
\dashedLine
\UIC{$\vec D\vdash E$}
\AXC{$\Lambda$}
\noLine
\UIC{$\vec D, E\vdash F$}
\RL{$cut$}
\BIC{$\vec D\vdash F$}
\DP
\qquad \approx \qquad
\left(
\AXC{$\vec D$}
\AXC{$\vec D$}
\noLine
\UIC{$\Sigma^{?}$}
\noLine
\UIC{$E$}
\noLine
\BIC{$\Lambda$}
\noLine
\UIC{$F$}
\DP\right)
\end{align}\end{minipage}}\end{center}
\end{itemize}

%
%
%
%
%
%
%

%
The naturality condition for $\Pi_{D}[\Sigma^{?},\Xi^{?}]$ amounts to the stipulation that for any two formulas $E,F$ and derivations 
$\Sigma: [\vec D], A \vdash E$, 
$\Xi: [\vec D], B \vdash E$ and 
$\Lambda : [\vec D], E \vdash F$, the same proof is denoted by the derivation obtained by composing $\Pi_{E}[\Sigma, \Xi]$ with $\C I^{\vec D}(\Lambda)$ and by the derivation obtained by replacing the holes of $\Pi_{F}$
by the functorial derivations $\Psi^{\vec D}_{A}(\Sigma)$ and $\Psi^{\vec D}_{B\vdash}(\Xi)$. In natural deduction this permutation is as illustrated below (where we omit the assumptions $\vec D$ for readability):
 \begin{center}
\adjustbox{scale=0.7}{
\begin{minipage}{1.4\textwidth}
\begin{align}\label{natu3}
\AXC{$A\lor B$}
\AXC{$\stackrel{p}{A}$}
\noLine
\UIC{$\Sigma$}
\noLine
\UIC{$  E$}
\AXC{$\stackrel{q}{B}$}
\noLine
\UIC{$\Xi$}
\noLine
\UIC{$ E$}
\RL{$\lor$E $(p,q)$}
\TIC{$E$}
\noLine
\UIC{$\Lambda$}
\noLine
\UIC{$ F$}
\DP
\qquad \equiv \qquad
\AXC{$A\lor B$}
\AXC{$\stackrel{p}{A}$}
\noLine
\UIC{$\Sigma$}
\noLine
\UIC{$  E$}
\noLine
\UIC{$\Lambda$}
\noLine
\UIC{$ F$}
\AXC{$\stackrel{q}{B}$}
\noLine
\UIC{$\Xi$}
\noLine
\UIC{$  E$}
\noLine
\UIC{$\Lambda$}
\noLine
\UIC{$ F$}
\RL{$\lor$E $(p,q)$}
\TIC{$F$}
\DP\end{align}\end{minipage}}\end{center}

The validity in $\MSEQ_{\C S}/\equiv^{\mathrm{Perm}}$ of the permutation above follows then from Proposition \ref{prop:natu1bis}.
On the other hand, the naturality condition \eqref{natu3} precisely expresses the equivalence of natural deduction derivations up to permutations of rules with ($\lor$E), and thus fully characterizes the principle by which two natural deduction derivations differing by a free permutations with ($\lor$E) denote the same proof.

%
%
%
\section{Propositional Quantification and Parametricity}\label{sec:parametricity}

When a logical system includes either first- or second-order quantifiers, other families of free permutations of rules have to be considered (see for instance the appendix of \citep{Hughes2018}), like for example those illustrated below (where we suppose $x$ not to occur free in either $A$ and $B$).
 \begin{center}
\adjustbox{scale=0.7}{
\begin{minipage}{1.4\textwidth}
\begin{align}\label{forall1}
\AXC{}
\RL{$ax$}
\UIC{$ A \vdash A$}
\RL{$\forall$R}
\UIC{$ A \vdash \forall x.A$}
\AXC{}
\RL{$ax$}
\UIC{$B\vdash B$}
\RL{$\supset$L}
\BIC{$B, B\supset A \vdash \forall x.A$}
\DP
\qquad\qquad
\AXC{}
\RL{$ax$}
\UIC{$ A \vdash A$}
\AXC{}
\RL{$ax$}
\UIC{$B\vdash B$}
\RL{$\supset$L}
\BIC{$B, B\supset A \vdash A$}
\RL{$\forall$R}
\UIC{$ B,B\supset A \vdash \forall x.A$}
\DP
\end{align}
\end{minipage}}

\bigskip

\adjustbox{scale=0.7}{
\begin{minipage}{1.4\textwidth}
\begin{align}\label{exists1}
\AXC{}
\RL{$ax$}
\UIC{$A\vdash A$}
\RL{$\exists$L}
\UIC{$ \exists x.A \vdash A$}
\AXC{}
\RL{$ax$}
\UIC{$B\vdash B$}
\RL{$\land$R}
\BIC{$B, \exists x.A \vdash A\land B$}
\DP
\qquad\qquad
\AXC{}
\RL{$ax$}
\UIC{$ A \vdash A$}
\AXC{}
\RL{$ax$}
\UIC{$B\vdash B$}
\RL{$\land$R}
\BIC{$B, A \vdash A \land B$}
\RL{$\exists$L}
\UIC{$ B,\exists x.A \vdash A\land B$}
\DP
\end{align}
\end{minipage}
}
\end{center}
For these examples one can develop an analysis based on naturality conditions similar to the one from Section \ref{sec:permutations}.\footnote{Actually, in defining free permutations of rules one must also take care of the side-conditions for the rules ($\forall$R) and ($\exists$L) assuring that the \emph{eigen-variable} does not occur in the context.} 
Interestingly, if we only add universal quantification to disjunction-free (propositional) intuitionistic logic, natural deduction still provides a canonical proof-system. For instance, to the two derivations in \eqref{forall1} there corresponds the unique derivation below.
 \begin{center}
\adjustbox{scale=0.7}{
\begin{minipage}{1.4\textwidth}
\begin{align}
\AXC{$\stackrel{p}{B\supset A}$}
\AXC{$\stackrel{q}{B}$}
\RL{$\supset$E}
\BIC{$A$}
\RL{$\forall$I}
\UIC{$\forall x.A$}
\DP\end{align}\end{minipage}}\end{center}
As soon as existential quantification is added, canonicity is lost. Indeed, the $\exists$-elimination rule is usually formulated as a generalized elimination rule, so the discussion from the previous section applies. For instance, to the two derivations in \eqref{exists1} there correspond two distinct natural deduction derivations illustrated below:
\begin{center}
\adjustbox{scale=0.7}{
\begin{minipage}{1.4\textwidth}
\begin{align}
\AXC{$\stackrel{p}{\exists x.A}$}
\AXC{$\stackrel{s}{A}$}
\RL{$\exists$E $(s)$}
\BIC{$A$}
\AXC{$\stackrel{q}{B}$}
\RL{$\land$I}
\BIC{$A\land B$}
\DP
\qquad \qquad
\AXC{$\stackrel{p}{\exists x.A}$}
\AXC{$\stackrel{s}{A}$}
\AXC{$\stackrel{q}{B}$}
\RL{$\land$I}
\BIC{$A\land B$}
\RL{$\exists$E $(s)$}
\BIC{$A\land B$}
\DP\end{align}\end{minipage}}\end{center}
 Similarly, canonicity in natural deduction is lost as soon as we consider classical logic (either linear or not) instead of intuitionistic logic. Graphical formalisms capturing free permutations involving quantifiers in classical (linear) logic have been studied in the literature (either based on proof nets \citep{LLSS} or combinatorial proofs \citep{Hughes2006,Strass2021}).

Depending on the particular form of quantification (first- or second-order) and  on the more or less {extensional} stance we have towards identity of proofs, further ways of permuting rules involving quantifiers can be considered.
These new permutations can be justified, as we'll see, by a strict interpretation of the schematic nature of first- and second-order derivations that in the computer science community goes under the name of \emph{parametricity} \citep{Reynolds1983, Bainbridge1990}. 

 While the permutations arising from parametricity are not \emph{strictu sensu} free permutations of rules (as they might involve more complex patterns), we will show that criteria of identity of proofs based on them can still be expressed as naturality conditions.
In this section we will focus on rule permutations for propositional (i.e.~second order) quantification, and in the next section we will consider rule permutations for first-order quantification.

\subparagraph{The Parametricity of Second-Order Proofs}
What is a proof of a universally quantified formula like $\forall p.p \supset p$, where $p$ ranges over the domain of all propositions? The formula says that for any possible choice of a proposition $A$, it is true that $A$ implies $A$. A natural answer would thus seem to be that a proof of $\forall p.p\supset p$ should be some method producing, for each choice of a proposition $A$, a proof of $A\supset A$. 
Under the functional interpretation of proofs, this means a method producing, for each proposition (or type) $A$, a function from $A$ to $A$ itself. 

Certainly any given proposition comes with its own ways of proving $A\supset A$ (or its own class of functions from $A$ to $A$). Yet, as is clear by inspecting the rules for quantifiers, proving $\forall p.p\supset p$ does not amount at listing different proofs of $A\supset A$, for any possible $A$; instead, it amounts at proving that $p\supset p$ holds where $p$ is a variable standing for any possible proposition; in this way, for any instantiation of $p$ as $A$, we can obtain a proof of $A\supset A$ in a uniform way. In functional terms, this means having what is called a \emph{parametric} function from a variable type $p$ to itself: any instantiation of $p$ with a type $A$ yields a function from $A$ to $A$, but all such functions will behave ``in the same way'' \citep{Reynolds1983, Bainbridge1990, Hermida2014}.

Interestingly, it has been argued that the interpretation of second-order proofs as parametric functions provides a possible way-out from traditional philosophical arguments agains the so-called ``impredicativity'' of second order logic, see \citep{Longo1997, BSL}.

Under this parametric view, there seems to be not many ways of constructing a proof of $p\supset p$: such a proof must produce a function from $p$ to $p$, where $p$ indicates a proposition or type we know nothing about. The only actual choice is provided by the identity function, corresponding to the natural deduction derivation below
\begin{center}
\adjustbox{scale=0.7}{
\begin{minipage}{1.4\textwidth}
\begin{align}\label{unique}
\AXC{$\stackrel{n}{p}$}
\RL{$\supset$I $(n)$}
\UIC{$p\supset p$}
\RL{$\forall$I}
\UIC{$\forall p.p\supset p$}
\DP
\end{align}\end{minipage}}\end{center}
Indeed, a purely semantic reasoning based on the idea of parametricity leads to conclude that the derivation above provides the \emph{unique} way of proving $\forall p.p\supset p$. We will now see how this line of reasoning can be used to conclude that different derivations must denote the same proof.

\subparagraph{New Permutations from Parametricity}

The view of second-order proofs as parametric functions can be used to justify new ways of permuting rules, and thus to defend a broader notion of identity of proof.
For instance, consider the two derivations below:
\begin{center}
\adjustbox{scale=0.7}{
\begin{minipage}{1.4\textwidth}
\begin{align}\label{param1}
\AXC{$\stackrel{r}{B\supset C}$}
\AXC{$\stackrel{n}{\forall p.p\supset p}$}
\RL{$\forall$E}
\UIC{$B\supset B$}
\AXC{$\stackrel{q}{B}$}
\RL{$\supset$E}
\BIC{$B$}
\RL{$\supset$E}
\BIC{$C$}
\DP
\qquad\qquad
\AXC{$\stackrel{n}{\forall p.p\supset p}$}
\RL{$\forall$E}
\UIC{$C\supset C$}
\AXC{$\stackrel{r}{B\supset C}$}
\AXC{$\stackrel{q}{B}$}
\RL{$\supset$E}
\BIC{$C$}
\RL{$\supset$E}
\BIC{$C$}
\DP\end{align}\end{minipage}}\end{center}Both derivations are constructed out of one occurrence of ($\forall$E) and two occurrences of ($\supset$E); however, these rules not only occur in different order, but the two occurrences of ($\forall$E) employ distinct instantiations of the variable $p$ (as $B$ on the left and as $C$ on the right). 
So why should we regard these derivations as denoting the same proof?

Suppose, as we did above, that a proof of $\forall p.p\supset p$, since parametric, can only be one which corresponds, functionally, to the identity function. 
Under this assumption, however we replace the undischarged assumption with label $n$ in any of the derivations in \eqref{param1}, this will produce (after a few normalization steps) the same proof. This fact is best appreciated when the derivations are decorated with proof terms (see also \citep{Isomorphism}, pp.~16-17):
\begin{center}
\adjustbox{scale=0.7}{
\begin{minipage}{1.4\textwidth}
\begin{align}\label{conve}
\AXC{$h:{B\supset C}$}
\AXC{$f:{\forall p.p\supset p}$}
\RL{$\forall$E}
\UIC{$fB:B\supset B$}
\AXC{$b:{B}$}
\RL{$\supset$E}
\BIC{$fBb: B$}
\RL{$\supset$E}
\BIC{$h(fBb):C$}
\DP
\qquad\qquad
\AXC{$f:{\forall p.p\supset p}$}
\RL{$\forall$E}
\UIC{$fC:C\supset C$}
\AXC{$h:{B\supset C}$}
\AXC{$b:{B}$}
\RL{$\supset$E}
\BIC{$hb:C$}
\RL{$\supset$E}
\BIC{$fC(hb):C$}
\DP
\end{align}\end{minipage}}\end{center}
Parametricity warrants that $fB$ and $fC$ must be the identity functions $1_{B}:B\supset B$ and $1_{C}:C\supset C$, and thus the two derivations encode (for any $f,h,b,B$ and $C$) the same proof of $C$:
\begin{center}
\adjustbox{scale=0.7}{
\begin{minipage}{1.4\textwidth}
\begin{align}
h(fBb) =  h(1_{B}b)= hb= 1_{C}(hb) = fC(hb)
\end{align}\end{minipage}}\end{center}

Parametricity can also be used to justify permutations of rules involving the existential quantifier. For example, how many proofs do there exist of the proposition $\exists p.p$? Intuitively, there are infinitely many, since for any proof $\Pi$ of some proposition $A$, we can obtain a different derivation as below.
\begin{center}
\adjustbox{scale=0.7}{
\begin{minipage}{1.4\textwidth}
\begin{align}\label{der7}
\AXC{$\Pi$}
\noLine
\UIC{$A$}
\RL{$\exists$I}
\UIC{$\exists p.p$}
\DP\end{align}\end{minipage}}\end{center}
Nevertheless, we will now argue, by reasoning in a somehow \emph{extensional} way, that two derivations as below should denote the same proof:
\begin{center}
\adjustbox{scale=0.7}{
\begin{minipage}{1.4\textwidth}
\begin{align}\label{exists2}
\AXC{${A\supset B}$}
\AXC{$\Pi$}
\noLine
\UIC{$A$}
\RL{$\supset$E}
\BIC{$B$}
\RL{$\exists$I}
\UIC{$\exists p.p$}
\DP
\qquad \equiv 
\qquad
\AXC{$\Pi$}
\noLine
\UIC{$A$}
\RL{$\exists$I}
\UIC{$\exists p.p$}
\DP\end{align}\end{minipage}}\end{center}
The extensional assumption we make is the following: if two proofs $\Pi,\Sigma$ of some formula $A$ are not the same, then there must be some way of separating them; more formally, there must be a (quantifier-free) formula $C$ with two distinct proofs $\Theta_{1},\Theta_{2}$, and a derivation $\Theta$ of $C$ with assumption $A$ such that $\Theta$ composed with $\Pi$ normalizes to $\Theta_{1}$, while
$\Theta$ composed with $\Sigma$ normalizes to $\Theta_{2}$:
\begin{center}
\adjustbox{scale=0.7}{
\begin{minipage}{1.4\textwidth}
\begin{align}
\AXC{$\Pi$}
\noLine
\UIC{$A$}
\noLine
\UIC{$\Theta$}
\noLine
\UIC{$C$}
\DP
\quad \leadsto\quad
 \AXC{$\Theta_{1}$}
\noLine
\UIC{$C$}
\DP
\quad\not\equiv \quad 
 \AXC{$\Theta_{2}$}
\noLine
\UIC{$C$}
\DP
\quad
\reflectbox{$\leadsto$}
\quad
\AXC{$\Sigma$}
\noLine
\UIC{$A$}
\noLine
\UIC{$\Theta$}
\noLine
\UIC{$C$}
\DP
\end{align}\end{minipage}}\end{center}
Under this assumption, the argument goes like this: suppose there exists a proof $\Sigma:(\exists p.p) \vdash C$ (notice that $p$ cannot occur in $C$) separating the two derivations in \eqref{exists2}  in the sense just explained.
%
%
$\Sigma$ can easily be converted into a proof of $\forall p. p\supset C$ (basic logic exercise), hence into a parametric proof $\Sigma'$ of $p\vdash C$, i.e.~a parametric function from the ``unknown'' type $p$ to a ``known'' type $C$, where the latter in no way depends on $p$.
One can then be convinced that this function can in no way use its input of type $p$, that is $\Sigma'$ cannot use the assumption $p$ (and thus $\Sigma$ cannot use the assumption $\exists p.p$). We deduce then that composing $\Sigma$ with some derivation $\Lambda$ of $\exists p.p$ has the effect of deleting $\Lambda$, and thus $\Sigma$ cannot be separating: 
%
%
\begin{center}
\adjustbox{scale=0.7}{
\begin{minipage}{1.4\textwidth}
\begin{align}
\AXC{$A\supset B$}
\AXC{$\Pi$}
\noLine
\UIC{$A$}
\RL{$\supset$E}
\BIC{$B$}
\RL{$\exists$I}
\UIC{$\exists p.p$}
\noLine
\UIC{$\Sigma$}
\noLine
\UIC{$C$}
\DP
\quad \equiv\quad
 \AXC{$\Sigma$}
\noLine
\UIC{$C$}
\DP
\quad
\equiv\quad
\AXC{$\Pi$}
\noLine
\UIC{$A$}
\RL{$\exists$I}
\UIC{$\exists p.p$}
\noLine
\UIC{$\Sigma$}
\noLine
\UIC{$C$}
\DP
\end{align}\end{minipage}}\end{center}
Put in simple words, the two derivations in \eqref{exists2} must denote the same proof since any way of possibly distinguishing them should arise from some parametric function, but no parametric function can do the job.

Using the permutation above, it is not difficult to show that any two derivations  of $\exists p.p$ denote the same proof: given two derivations as below
\begin{center}
\adjustbox{scale=0.7}{
\begin{minipage}{1.4\textwidth}
\begin{align}
\AXC{$\Pi_{1}$}
\noLine
\UIC{$A$}
\RL{$\exists$I}
\UIC{$\exists p.p$}
\DP
\qquad\qquad
\AXC{$\Pi_{2}$}
\noLine
\UIC{$B$}
\RL{$\exists$I}
\UIC{$\exists p.p$}
\DP\end{align}\end{minipage}}\end{center}we can reason as illustrated below:
\begin{center}
\adjustbox{scale=0.7}{
\begin{minipage}{1.4\textwidth}
\begin{align}
\AXC{$\Pi_{2}$}
\noLine
\UIC{$B$}
\RL{$\exists$I}
\UIC{$\exists p.p$}
\DP
\qquad
\stackrel{\tiny\text{($\beta$-equiv.)}}{\equiv}
\qquad
\AXC{$\Pi_{2}$}
\noLine
\UIC{$B$}
\RL{$\supset$I}
\UIC{$A\supset B$}
\AXC{$\Pi_{1}$}
\noLine
\UIC{$A$}
\RL{$\supset$E}
\BIC{$B$}
\RL{$\exists$I}
\UIC{$\exists p.p$}
\DP
\qquad\stackrel{\tiny\text{\eqref{exists2}}}{\equiv} 
\qquad
\AXC{$\Pi_{1}$}
\noLine
\UIC{$A$}
\RL{$\exists$I}
\UIC{$\exists p.p$}
\DP\end{align}\end{minipage}}\end{center}

\begin{remark}
The permutations arising from parametricity can be related to those discussed in Section \ref{sec:disjunction}. 
As is well-known, disjunction $A\lor B$ can be defined in second-order logic by the formula $\forall X.(A\supset X)\supset (B\supset X)\supset X$. This gives rise to a translation of second-order intuitionistic logic with disjunction into its disjunction-free fragment \citep{Prawitz1965} which not only preserves provability but also translates normalization steps (i.e.~$\beta$-reduction) into normalization steps.
However, the picture changes when permutative conversions are added as reduction steps: as discussed at length in \citep{StudiaLogica,SL2}, these conversions do not translate into $\beta$-reductions in second-order logic, but into a sequence of $\beta$-reductions, $\eta$-expansions as well as conversions similar to \eqref{conve}, which can be justified using parametricity or (as we discuss below) by stipulating a suitable naturality condition to hold. 

%
\end{remark}

\subparagraph{Formulae as Functors, Proofs as Natural Transformations}
We will now show that the criterion for identity of proofs expressed by
rule permutations like \eqref{param1} and \eqref{exists2} can be expressed by a naturality condition, this time applied to proofs and not just to rules. The idea that second-order proofs correspond to natural transformations dates back (at least) to \citep{Bainbridge1990, Girard1992}.
We here only provide a sketch of how such naturality condition can be defined, a more detailed discussion can be found in \citep{StudiaLogica, SL2}.

We start by formalizing the kind of extensional way reasoning that we used above, by defining a suitable quotient on the category of formulae.  
For any two formulae $A,B$, let $\equiv_{A,B}^{\mathrm{ext}}$ be the equivalence relation over derivations $\Pi,\Sigma: [\vec D],A \vdash B$ defined as follows: $\Pi \equiv_{A,B}^{\mathrm{ext}} \Sigma$ if there exists a quantifier-free formula $C$ and a derivation 
$\Theta: A\supset B \vdash C$ such that the two derivations obtained by composing $\Theta$ and $\Pi$, and $\Theta$ and $\Sigma$, respectively, have the same normal form.\footnote{This equivalence is a variant of usual \emph{contextual equivalence} from typed $\lambda$-calculus, see \citep{Barendregt2013}, p.~90.} We let the reader check that this is indeed an equivalence relation, and that $\equiv_{A,B}^{\mathrm{cut}}$ is included in $\equiv_{A,B}^{\mathrm{ext}}$.
We will consider the quotient category $\FORM_{\C S}(\vec D)/\equiv^{\mathrm{ext}}$ induced by this family of equivalence relations.
%

%
%
%
%
%
%

The fundamental observation now is that a formula $A[p]$ containing only \emph{positive} occurrences of a propositional variable $p$ yields a functor $F[p]: \FORM_{\C S}(\vec D)\longrightarrow \FORM_{\C S}(\vec D)/\equiv^{\mathrm{ext}}$ (see \citep{StudiaLogica, PistoneCSL}).
We limit ourselves here to a few examples:
\begin{itemize}

\item a formula $A$ not containing $p$ yields the constant functor $\Phi_{A}(E)=A$, which associates a derivation $\Pi:[\vec D],E\vdash F$ with the obvious derivation $\Phi_{A}[\Pi]:[\vec D], A \vdash A$;

\item the formula $A[p]=p$ yields the identity functor $\Phi_{A}(E)=E$, which associates a derivation with itself (more precisely, with its $\equiv^{\mathrm{ext}}$-equivalence class);

\item the formula $A[p]= B\supset p$, where $p$ does not occur in $B$, yields the functor $\Phi_{A}(E)=B\supset E$, which associates a derivation $\Pi: [\vec D],E\vdash F$ with the ($\equiv^{\mathrm{ext}}$-equivalence class of the) derivation $\Phi_{A}[\Pi]: [\vec D], B\supset E\vdash B\supset F$ below (we formulate it in natural deduction for uniformity):
\end{itemize} 
\begin{center}
\adjustbox{scale=0.7}{
\begin{minipage}{1.4\textwidth}
\begin{align}
\Phi_{A}[\Pi] \quad := \quad 
\AXC{$G_{B}[E]=B\supset E$}
\AXC{$\stackrel{p}{B}$}
\RL{$\supset$E}
\BIC{$E$}
\AXC{$\vec D$}
\noLine
\BIC{$\Pi$}
\noLine
\UIC{$F$}
\RL{$\supset$I $(p)$}
\UIC{$G_{B}[F]=B\supset F$}
\DP\end{align}\end{minipage}}\end{center}

We will now show that a derivation $\Pi$, whose conclusion $A[p]$ and assumptions $A_{1}[p],\dots, A_{k}[p]$ only contain positive occurrences of $p$, yields a natural transformation between the associated functors.
It is clear that for any choice of a formula $E$, by instantiating $p$ as $E$, we obtain a derivation $\Pi_{E}: A_{1}[E],\dots, A_{k}[E] \vdash A[E]$. Now, the naturality condition for the family of derivations $\Pi_{E}$ reads as follows: for any two formulae $E,F$ and derivation $\Sigma: [\vec D],E\vdash F$, the same proof is denoted by the derivation obtained by composing $\Pi_{E}$ with the functorial derivation $A[\Sigma]: [\vec D],A[E]\vdash A[F]$ and by the derivation obtained by replacing the assumptions of $\Pi_{F}$ with the functorial derivations $A_{i}[\Sigma]: [\vec D],A_{i}[E]\vdash A_{i}[F]$; in other words, this condition corresponds to the validity of the general permutation below (where we omit the assumptions $\vec D$ for readability):
 \begin{center}
\adjustbox{scale=0.7}{
\begin{minipage}{1.4\textwidth}
\begin{align}\label{natu3}
\AXC{$A_{1}[E]$}
\AXC{$\dots$}
\AXC{$A_{k}[E]$}
\noLine
\TIC{$\Pi_{E}$}
\noLine
\UIC{$A[E]$}
\noLine
\UIC{$\Phi_{A}[\Sigma]$}
\noLine
\UIC{$A[F]$}
\DP
\qquad
\equiv
\qquad
\AXC{$A_{1}[E]$}
\noLine
\UIC{$\Phi_{A_{1}}[\Sigma]$}
\noLine
\UIC{$A_{1}[D]$}
\AXC{$\dots$}
\AXC{$A_{k}[E]$}
\noLine
\UIC{$\Phi_{A_{k}}[\Sigma]$}
\noLine
\UIC{$A_{k}[F]$}
\noLine
\TIC{$\Pi_{F}$}
\noLine
\UIC{$A[F]$}
\DP\end{align}\end{minipage}}\end{center}

The following result holds in the case of second-order intuitionistic logic.
\begin{proposition}\label{prop:parametricity}
If $\C F$ indicates second-order propositional intuitionistic logic (i.e.~System F, see \citep{Girard1989}), then all instances of \eqref{natu3} hold in $\FORM_{\C F}(\vec D)/\equiv^{\mathrm{ext}}$. Thus, the family of derivations $\Pi_{\_}: \Phi_{A_{1}\land\dots\land A_{k}}(\_) \To \Phi_{A}(\_)$ is a natural transformation.
\end{proposition}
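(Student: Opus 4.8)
The plan is to pass to System~F proof-terms and deduce the statement from Reynolds' abstraction theorem, relying on the fact---established in \citep{StudiaLogica, SL2}, which this section surveys---that the quotient by $\equiv^{\mathrm{ext}}$ carries a parametric model of System~F. First I would decorate $\Pi$ with a term: since the conclusion $A[p]$ and the assumptions $A_{1}[p],\dots,A_{k}[p]$ are positive in $p$, the derivation corresponds to a typing $x_{1}\colon A_{1}[p],\dots,x_{k}\colon A_{k}[p]\vdash t\colon A[p]$ (leaving the fixed context $\vec D$ implicit), hence to the closed term $M := \Lambda p.\,\lambda x_{1}\dots\lambda x_{k}.\,t$ of type $\forall p.\,A_{1}[p]\supset\dots\supset A_{k}[p]\supset A[p]$. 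Under this correspondence $\Pi_{E}$ is $t[E/p]$; and for a positive $B[p]$ the functorial action $\Phi_{B}[\Sigma]$, applied to the term $\sigma$ of $\Sigma\colon E\vdash F$, is a ``map'' term $B[\sigma]\colon B[E]\supset B[F]$ defined by the evident induction on $B$ (the clauses illustrated before the proposition being representative cases; the $\supset$-, $\land$- and $\forall$-cases are covariant, positivity guaranteeing that no implication has $p$ in its domain). Checking $B[1_{E}]\equiv^{\mathrm{ext}}1_{B[E]}$ and $B[\sigma'\circ\sigma]\equiv^{\mathrm{ext}}B[\sigma']\circ B[\sigma]$ makes each $\Phi_{B}$ a functor. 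With this dictionary, the square \eqref{natu3} becomes exactly the equation, for all $E,F,\sigma$ and all $x_{1},\dots,x_{k}$,
\[
A[\sigma]\bigl(M\,E\,x_{1}\dots x_{k}\bigr)\ \equiv^{\mathrm{ext}}\ M\,F\,\bigl(A_{1}[\sigma]\,x_{1}\bigr)\dots\bigl(A_{k}[\sigma]\,x_{k}\bigr).
\]

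To prove this equation I would instantiate the relational semantics at a graph. Writing $\modd{-}$ for the relational interpretation of System~F types in the $\equiv^{\mathrm{ext}}$-model, the abstraction theorem gives $(M,M)\in\modd{\forall p.\,A_{1}[p]\supset\dots\supset A_{k}[p]\supset A[p]}$; unfolding the outermost quantifier, for all types $E,F$ and all (suitably saturated) relations $R\subseteq E\times F$ we obtain $(M\,E,M\,F)\in\modd{A_{1}[p]\supset\dots\supset A[p]}_{R}$. I would then take $R$ to be the graph of $\sigma$, namely $R=\{(e,f)\mid f\equiv^{\mathrm{ext}}\sigma\,e\}$, and invoke the \emph{graph lemma}: for every positive $B[p]$, the relation $\modd{B[p]}_{R}$ coincides, up to $\equiv^{\mathrm{ext}}$, with the graph of the map term $B[\sigma]$. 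Feeding this into the membership above, the assertion ``$(M\,E,M\,F)$ are related in $\modd{A_{1}[p]\supset\dots\supset A[p]}_{R}$'' unwinds to: whenever $x_{i}$ and $x_{i}'$ are $\mathrm{graph}(A_{i}[\sigma])$-related, $M\,E\,x_{1}\dots x_{k}$ and $M\,F\,x_{1}'\dots x_{k}'$ are $\mathrm{graph}(A[\sigma])$-related; taking $x_{i}'=A_{i}[\sigma]\,x_{i}$ yields precisely the displayed equation. Hence all instances of \eqref{natu3} hold in $\FORM_{\C F}(\vec D)/\equiv^{\mathrm{ext}}$, and by the definition of natural transformation---in the multi-arrow form of Remark~\ref{rem:multinat}, or equivalently bundling the $A_{i}$ into the conjunction $A_{1}\land\dots\land A_{k}$---the family $\Pi_{\_}$ is a natural transformation $\Phi_{A_{1}\land\dots\land A_{k}}(\_)\To\Phi_{A}(\_)$.

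The delicate part, and where I would lean on \citep{StudiaLogica, SL2}, is the interface between parametricity and $\equiv^{\mathrm{ext}}$, which has two ingredients. First, the abstraction theorem must hold \emph{for the $\equiv^{\mathrm{ext}}$-quotient}: one has to show that the $\equiv^{\mathrm{ext}}$-saturated relations form a parametric model, equivalently that in $\FORM_{\C F}(\vec D)/\equiv^{\mathrm{ext}}$ every closed term is related to itself---this is exactly the point at which the restriction to \emph{quantifier-free} test formulae $C$ in the definition of $\equiv^{\mathrm{ext}}$ is used, and it is among the results surveyed here. Second, the graph lemma is immediate in the base cases ($B=p$ returns $R$; a $B$ not containing $p$ returns the diagonal, since the constant functor sends $\sigma$ to an identity up to $\equiv^{\mathrm{ext}}$) and its $\supset$- and $\land$-steps are routine $\beta\eta$-manipulations, but the $\forall$-step---which requires the relational quantifier to commute with graphs---genuinely uses parametricity rather than mere functoriality. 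Everything else (the inductive definition of the map terms, the functoriality checks, and the final unwinding) is bookkeeping.
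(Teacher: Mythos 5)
Your argument is correct in outline but takes a genuinely different route from the paper. The paper's proof is indirect and much shorter: it invokes (i) the fact that $\equiv^{\mathrm{ext}}$ is the \emph{maximum} consistent equational theory of System F, and (ii) the fact that $\beta\eta$ plus all instances of \eqref{natu3} form a \emph{consistent} theory, because it is validated by some non-trivial (di)natural model in the style of \citep{Bainbridge1990}; maximality then forces the naturality equations to be contained in $\equiv^{\mathrm{ext}}$, with no need to analyse the quotient $\FORM_{\C F}(\vec D)/\equiv^{\mathrm{ext}}$ itself. You instead argue directly inside that quotient, via the abstraction theorem instantiated at the graph relation of $\sigma$ together with a graph lemma for positive type expressions. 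The trade-off is exactly at the point you flag as delicate: your route requires showing that the $\equiv^{\mathrm{ext}}$-quotient \emph{itself} carries a parametric relational model (that every closed term is self-related over $\equiv^{\mathrm{ext}}$-saturated relations), which is a substantially stronger input than the mere existence of \emph{some} model validating \eqref{natu3}; the paper's maximality trick is designed precisely to avoid having to establish this. When your route goes through it yields more information (it exhibits the contextual quotient as parametric, and the graph-lemma machinery explains \emph{why} each instance of \eqref{natu3} holds rather than just \emph{that} it holds), but as a proof of the stated proposition it carries a heavier external burden, and you should be explicit that the cited surveys must supply the abstraction theorem for the $\equiv^{\mathrm{ext}}$-model and the $\forall$-case of the graph lemma, neither of which follows from the two facts the paper actually uses.
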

\begin{proof}{(Sketch)}
The claim follows from the following two facts (for details, see \citep{StudiaLogica, PistoneCSL}): first, the equivalence $\equiv^{\mathrm{ext}}$ is the \emph{maximum consistent equational theory} of System F, i.e.~the maximum equational theory for which there exists at least two non-equivalent derivations of some formula; second, the equations \eqref{natu3}, together with usual $\beta$- and $\eta$-equations, yield a consistent equational theory (since this theory has non-trivial models, see \citep{Bainbridge1990}). 
\end{proof}

The permutations \eqref{param1} and \eqref{exists2} are both instances of the naturality condition \eqref{natu3}.
Firstly, \eqref{param1} is an instance of \eqref{natu3}, with $k=2$, $A_{1}[p]=\forall q.q\supset q$, $A_{2}[p]=A_{3}[p]=p$, with $\Pi_{C}$ 
 and $\Sigma: B\vdash C$ being the derivation below:
 \begin{center}
\adjustbox{scale=0.7}{
\begin{minipage}{1.4\textwidth}
\begin{align}
\Pi_{C}\quad = \quad 
\AXC{$\forall p.p\supset p$}
\RL{$\forall$E}
\UIC{$C\supset C$}
\AXC{$C$}
\RL{$\supset$E}
\BIC{$C$}
\DP
\qquad\qquad
\Sigma \quad = \quad 
\AXC{$\stackrel{r}{B\supset C}$}
\AXC{$B$}
\RL{$\supset$E}
\BIC{$C$}
\DP\end{align}\end{minipage}}\end{center}
%
Secondly, \eqref{exists2} is an instance of \eqref{natu3} with $k=1$, $A_{1}[p]=p$, $A_{2}[p]=\exists q.q$, and with $\Pi_{C}$ and $\Sigma: A\vdash B$ being the derivations below
 \begin{center}
\adjustbox{scale=0.7}{
\begin{minipage}{1.4\textwidth}
\begin{align}
\Pi_{C}\quad = \quad 
\AXC{$C$}
\RL{$\exists$I}
\UIC{$\exists p.p$}
\DP
\qquad\qquad
\Sigma \quad = \quad 
\AXC{$\stackrel{r}{A\supset B}$}
\AXC{$A$}
\RL{$\supset$E}
\BIC{$B$}
\DP\end{align}\end{minipage}}\end{center}

\begin{remark}
Following \citep{Bainbridge1990, Girard1992} one can lift the restriction to formulas containing only positive occurrences of propositional variables obtaining that a formula $A$ yields a functor $\Phi_{A}[p,q]: \FORM\times \FORM^{\mathsf{op}}\longrightarrow \FORM$ of mixed variance. The naturality condition \eqref{natu3} is replaced then by a suitable \emph{dinaturality} condition \citep{MacLane}. 
\end{remark}

\begin{remark}
It is not true in general that naturality or dinaturality conditions like \eqref{natu3} characterize the quotient $\equiv^{\mathrm{ext}}$. Yet, as shown in \citep{PistoneCSL}, this is the case in some suitable fragments of second-order intuitionistic logic.
\end{remark}

The equational theory arising from naturality conditions like \eqref{natu3} (and more generally \emph{di}naturality conditions) can be hard to study in a purely syntactic way. In particular, in \citep{PistoneCSL} it is shown that the resulting equational theory is undecidable, but is decidable if one restricts to suitable fragments of second-order intuitionistic logic (see also \citep{StudiaLogica, SL2} for further proof-theoretical consequences of this fact).

%
%
%
%
%
%
%
%
%
%
%
%
%
%
%
%
%
%
%
%

%

\section{First-Order Quantification and Equality}\label{sec:unification}

The central intuition behind the rule permutations discussed in the previous section was that second-order derivations can be interpreted as parametric functions over propositions. Turning our attention to first-order logic, \emph{prima facie}, it would seem that first-order derivations might correspond as well to parametric functions over individuals, and that parametricity can be similarly invoked to justify
new conditions for identity of proofs.

In this section we provide a sketch of a way to formalize this intuition: we introduce a naturality condition for first-order derivations, and discuss a few permutations of rules that this condition permits to justify. 
Contrarily to our discussion of parametricity in second-order logic, which relies on a robust semantic tradition, our discussion of naturality for first-order logic is, to our knowledge, new. 
Nevertheless, we take inspiration from a recent graphical approach to the formalization of first-order derivations (see \citep{Hughes2018, Strass2021}), which seems capable of justifying rule permutations like those we discuss below.

In the previous sections we started from some already well-known class of rule permutations, and we showed that the principle by which two derivations only differing by one of such permutations denote the same proof can be expressed by a naturality condition. In this section we follow a reverse path: we first introduce a naturality condition capturing the idea of first-order proofs as parametric functions, and we then illustrate a few rule permutations that can be shown to be identity-preserving as a consequence of this condition.
%


%

\subparagraph{Natural Deduction with Equality}
We will work in a first-order language with equality, using a natural deduction framework with introduction and elimination rules for $=$ as follows (see e.g.~\citep{ptlc1}, p.~123, or \citep{deQueiroz2014}):

 \begin{center}
\adjustbox{scale=0.7}{
\begin{minipage}{1.4\textwidth}
\begin{align}
\AXC{}
\RL{$=$I}
\UIC{$t=t$}
\DP
\qquad\qquad
\AXC{$t=u$}
\AXC{$P(t)$}
\RL{$=$E}
\BIC{$P(u)$}
\DP\end{align}\end{minipage}}\end{center}
Observe that the usual commutativity and transitivity of equality $t=u\vdash u=t$ and $t=u,u=v\vdash t=v$ can be proved using the rules above. 
Moreover, we will consider derivations up to the equivalence generated by the $\beta$- and $\eta$-rules below

 \begin{center}
\adjustbox{scale=0.7}{
\begin{minipage}{1.4\textwidth}
\begin{align}
\AXC{}
\RL{$=$I}
\UIC{$t=t$}
\AXC{$P(t)$}
\RL{$=$E}
\BIC{$P(t)$}
\DP
\quad \equiv_{\beta}\quad
\AXC{$P(t)$}
\DP
\qquad\qquad
\AXC{}
\RL{$=$I}
\UIC{$u=u$}
\DP
\quad\equiv_{\eta}\quad
\AXC{$t=u$}
\AXC{}
\RL{$=$I}
\UIC{$t=t$}
\RL{$=$E}
\BIC{$u=u$}
\DP\end{align}\end{minipage}}\end{center}
Weaker variants of the $\eta$-rule are considered in 
\citep{Helman1987}. A more refined approach to equality in natural deduction is presented in \citep{deQueiroz2014}.

By a \emph{substitution} we indicate a function $\theta$ from a finite set first-order variables, called the \emph{support} of $\theta$ and noted $\mathrm{supp}(\theta)$, to first-order terms. For any substitution $\theta$, we let $\Gamma_{\theta}$ be the finite set of equations $\{x_{1}=\theta(x_{1}),\dots, x_{k}=\theta(x_{k})\}$, where $\mathrm{supp}(\theta)=\{x_{1},\dots, x_{k}\}$. 
Observe that $\Gamma_{\theta}$ completely characterizes $\theta$.
Finally, for any first-order term $t$ or formula $A$ and substitution $\theta$, we let $t\theta$ (resp.~$A\theta$) be the result of replacing in $t$ (resp.~in $A$) any variable $x\in \mathrm{supp}(\theta)$ with $\theta(x)$.

\subparagraph{From Parametricity to First-Order Naturality}

Let us first make the analogy with the second-order case more explicit. In the previous section we explored the view that a proof of a universally quantified formula $\forall p.A$ corresponds to a parametric function producing, for any proposition $C$, a proof of $A[C]$, so that for different arguments $C$ and $D$, the proofs of $A[C]$ and $A[D]$ are constructed ``in the same way''.
When moving to a first-order setting, it makes perfect sense to say that also a proof of a universally quantified formula $\forall x.A$ should be thought as a parametric function yielding, for any first-order term $t$, a proof of $A[t]$ so that for different arguments $t$ and $u$, the resulting proofs of $A[t]$ and $A[u]$ are constructed ``in the same way''.
After all, standard rules for first-order and second-order universal quantifier follow the same pattern.

We also observed that the parametric view leads to identity all derivations of $\exists p.p$, using the fact that a proof of $(\exists p.p)\supset C$ (i.e.~of $\forall p.p\supset C$) is a parametric function. 
We can imagine a similar argument for first-order logic: take two distinct derivations of $\exists x. x=x$, e.g.~coming from different instances of the rule ($=$I):
 \begin{center}
\adjustbox{scale=0.7}{
\begin{minipage}{1.4\textwidth}
\begin{align}\label{fol1}
\AXC{}
\RL{$=$I}
\UIC{$t=t$}
\RL{$\exists$I}
\UIC{$\exists x.x=x$}
\DP
\qquad
\qquad
\AXC{}
\RL{$=$I}
\UIC{$u=u$}
\RL{$\exists$I}
\UIC{$\exists x.x=x$}
\DP\end{align}\end{minipage}}\end{center}
If we reason in an extensional way, in order to distinguish the two derivations above, we should be capable of constructing a derivation of $(\exists x.x=x)\supset C$ (i.e.~of $\forall x.(x=x\supset C)$), where $C$ is quantifier-free and does not contain $x$. Yet, since $x$ cannot occur in $C$, such a derivation (that we suppose to be in normal form) cannot actually employ the assumption $x=x$, and thus, by an argument similar to that of the previous section, this proof cannot distinguish between the two derivations above. We are thus led to the claim that there should be exactly one proof of $\exists x.x=x$, up to equivalence. How can we make these intuitive ideas precise?

\subparagraph{Formulas as Functors, Proofs as Natural Transformations}

The fundamental difference between the first-order and second-order case is that in the latter one is considering parametric propositions $A[p]$ acting over propositions themselves, that we conveniently described as functors $\FORM_{\C S}\longrightarrow \FORM_{\C S}/\equiv^{\mathrm{ext}}$; in the first-order case formulae are not dependent on propositions but on individual terms. We thus need to replace the category $\FORM_{\C S}$ in input with a suitable category for terms.

 We define the \emph{category of terms of $\C S$}, indicated as $\TERM_{\C S}$, as follows: its objects are first-order terms in the language of $\C S$, noted $t,u,v,\dots$; an arrow between two terms $t,u$ (noted $\theta:t\mapsto u$) is given by a substitution $\theta$ such that $u=t\theta$; diagrams are just plain equations between substitutions; the identity morphism $1_{t}$ is the identity substitution with support the set of free variables of $t$; the composition of substitutions $\theta$ and $\lambda$ is given by the substitution $\lambda *\theta$ such that for any term $t$, $t(\lambda*\theta)=(t\lambda)\theta$ (the reader can verify that $\lambda*\theta$ still has a finite support).

Moreover, we consider a slight variation $\FORM_{\C S}^{=}$ of the category $\FORM_{\C S}$: objects are always formulae of $\C S$, but an arrow of $\FORM_{\C S}^{=}$ between $A$ and $B$ is now a pair $(\B E,\Pi)$ made of a finite set of equations $\B E=\{t_{1}=u_{1},\dots, t_{n}=u_{n}\}$ together with a derivation of $\B E,A \vdash B$ (more precisely, a $\equiv^{\mathrm{cut}}$-equivalence class of derivations). The identity $1_{A}$ is the pair made of the empty set and the obvious derivation of $A\vdash A$; the composition of $(\B E, \Pi)$ and $(\B F, \Sigma)$ is made of $\B E\cup \B F$ together with the derivation obtained by applying a cut-rule to $\Pi$ and $\Sigma$.

The quotient $\equiv^{\mathrm{ext}}$ is extended from $\FORM_{\C S}$ to $\FORM_{\C S}^{=}$ by letting $(\B E, \Pi)\equiv^{\mathrm{ext}}_{A,B}(\B F,\Sigma)$ when $\B E=\B F$ and $\Pi\equiv^{\mathrm{ext}}_{A,B}\Sigma$ holds (seeing $\Pi$ and $\Sigma$ as arrows in $\FORM_{\C S}(\B E)$).

Let $P[x]$ be a first-order formula with possible occurrences of the variable $x$. We can associate with $P[x]$ a functor $ \TERM_{\C S} \longrightarrow \FORM_{\C S}^{=}/\equiv^{\mathrm{ext}}$ defined as follows: for any term $t$, we let $P[t]$ be the formula $P[x:=t]$; for any substitution $\theta: t\mapsto u$, we let $P[\theta]$ be the pair composed by $\Gamma_{\theta}$ together with the derivation
$\Pi(P,\theta): \Gamma_{\theta}, P[x:=t]\vdash P[x:=u]$ below
 \begin{center}
\adjustbox{scale=0.7}{
\begin{minipage}{1.4\textwidth}
\begin{align}
\Pi(P,\theta)\quad := \quad 
\AXC{$\Gamma_{\theta}=\{x_{1}=\theta(x_{1}),\dots, x_{n}=\theta(x_{n})\}$}
\AXC{$P[x:=t]$}
\doubleLine
\RL{$=$E}
\BIC{$P[x:=u]$}
\DP\end{align}\end{minipage}}\end{center}using the fact that $u=t[x_{1}:=\theta(x_{1}),\dots, x_{n}:=\theta(x_{n})]$.

We now show that a derivation $\Pi$ of premisses $P_{1}[x],\dots, P_{n}[x]$ and conclusion $P[x]$, yields a natural transformation between the associated functors. 
It is clear that for any choice of a term $t$, by instantiating $x$ as $t$ we obtain a derivation $\Pi_{t}: P_{1}[x:=t],\dots, P_{n}[x:=t]\vdash P[x:=t]$.
The naturality condition for the family $\Pi_{t}$ reads then as follows: for any two terms $t,u$ and substitution $\theta: t\mapsto u$, the same proof is denoted by the derivation obtained by composing $\Pi_{t}$ with the functorial derivation $\Pi(P,\theta): \Gamma_{\theta},P[x:=t]\vdash P[x:=u]$, and by the derivation obtained by replacing the assumptions of $\Pi_{u}$ with the functorial derivations $\Pi(P_{i},\theta):\Gamma_{\theta},P_{i}[x:=t] \vdash P_{i}[x:=t]$, as illustrated below (where we omit the open assumptions $\Gamma_{\theta}$ for readability):
 \begin{center}
\adjustbox{scale=0.7}{
\begin{minipage}{1.4\textwidth}
\begin{align}\label{natu4}
\AXC{$P_{1}[x:=t]$}
\AXC{$\dots$}
\AXC{$P_{n}[x:=t]$}
\noLine
\TIC{$\Pi_{t}$}
\noLine
\UIC{$P[x:=t]$}
\noLine
\UIC{$\Pi(P,\theta)$}
\noLine
\UIC{$P[x:=u]$}
\DP
\qquad
\equiv
\qquad
\AXC{$P_{1}[x:=t]$}
\noLine
\UIC{$\Pi(P_{1},\theta)$}
\noLine
\UIC{$P_{1}[x:=u]$}
\AXC{$\dots$}
\AXC{$P_{n}[x:=t]$}
\noLine
\UIC{$\Pi(P_{n},\theta)$}
\noLine
\UIC{$P_{n}[x:=u]$}
\noLine
\TIC{$\Pi_{u}$}
\noLine
\UIC{$P[x:=u]$}
\DP\end{align}\end{minipage}}\end{center}

For first-order intuitionistic logic we can prove the following fact:

\begin{proposition}\label{prop:parametricity}
If $\C G$ indicates first-order intuitionistic logic, then all instances of \eqref{natu4} hold in $\FORM_{\C G}^{=}(\vec D)/\equiv^{\mathrm{ext}}$. Thus, the family of derivations $\Pi_{\_ }: P_{1}[\_]\land \dots \land P_{n}[\_]\To P[\_]$ is a natural transformation.
\end{proposition}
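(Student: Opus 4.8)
The plan is to mirror the strategy used for the second-order case (Proposition \ref{prop:parametricity} for System F), adapting the two key ingredients: the characterization of $\equiv^{\mathrm{ext}}$ as a maximal consistent theory, and the consistency of the theory obtained by adjoining the naturality equations \eqref{natu4} to the standard $\beta\eta$-equations. First I would make precise that $\equiv^{\mathrm{ext}}$, restricted to the intuitionistic first-order fragment $\C G$, is again the maximum consistent equational congruence on derivations: any congruence strictly larger than $\beta\eta$ that is still consistent (i.e.\ does not force all derivations of some formula to be identified) must be contained in $\equiv^{\mathrm{ext}}$. This is the analogue of the first bullet in the sketch proof of Proposition \ref{prop:parametricity}, and it follows by the same contextual-separation argument: if two derivations are $\not\equiv^{\mathrm{ext}}$ then by definition there is a quantifier-free $C$ and a separating context $\Theta$, so identifying them propagates to identifying two distinct derivations of $C$, collapsing the theory.

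Given this, it suffices to show that the congruence generated by the $\beta$- and $\eta$-equations for $=$ together with all instances of \eqref{natu4} is consistent; then, being consistent and containing $\beta\eta$, it is contained in $\equiv^{\mathrm{ext}}$, which is exactly the assertion that every instance of \eqref{natu4} holds in $\FORM^{=}_{\C G}(\vec D)/\equiv^{\mathrm{ext}}$. The second clause of the Proposition — that $\Pi_{\_}$ is then a natural transformation $P_{1}[\_]\land\cdots\land P_{n}[\_]\To P[\_]$ — is immediate once \eqref{natu4} holds, since \eqref{natu4} is literally the naturality square (in the multi-arrow form of Remark \ref{rem:multinat}) for the components $\Pi_{t}$ with respect to the functors $\TERM_{\C S}\longrightarrow\FORM^{=}_{\C S}/\equiv^{\mathrm{ext}}$ attached to the formulas $P_{i}[x]$ and $P[x]$, composed with the $\land$-functor; one only has to observe that $\Gamma_{\lambda*\theta}$ is derivably equivalent to $\Gamma_{\theta}\cup\Gamma_{\lambda}$ so that the functoriality $\Pi(P,\lambda*\theta)\equiv \Pi(P,\lambda)\circ\Pi(P,\theta)$ holds up to $\equiv^{\mathrm{ext}}$, which is a routine check using the $\beta\eta$-rules for $=$.

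To establish consistency I would exhibit a model. The natural candidate is a term model built from the quotient $\FORM^{=}_{\C G}(\vec D)/(\beta\eta+\eqref{natu4})$ itself, showing it is nontrivial; alternatively, and more robustly, I would interpret first-order derivations in a concrete category where the $=$-rules are modelled by the groupoid/discrete-fibration structure of substitutions and where naturality is automatic — for instance a presheaf-style or relational interpretation over the category $\TERM_{\C G}$, in the spirit of the relational interpretations used for second-order parametricity in \citep{Bainbridge1990}. In such a model each formula $P[x]$ is sent to a functor on $\TERM_{\C G}$ valued in sets-with-relations, a derivation is sent to a genuine natural family, and the $\beta\eta$-equations for $=$ hold because the action of a substitution on the interpretation of $P(t)$ is functorial; one then checks the model is nontrivial by exhibiting a formula with two non-identified derivations (e.g.\ using a two-element set to separate two distinct proofs of an atom). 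The existence of such a model is what forces the adjoined theory to be consistent.

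The main obstacle is precisely this consistency step: unlike the System F case, where one can cite the existing parametric models of \citep{Bainbridge1990}, here the semantic framework for first-order naturality ``with equality'' is, as the paper itself stresses, new, so the model has to be constructed essentially from scratch and one must verify that the $=$-elimination rule — which is genuinely a \emph{transport} operation along equality proofs — is soundly interpreted, and in particular that the $\eta$-rule (transport along reflexivity acts as a transport-identity, and more generally the groupoid laws for equality proofs) is validated. A secondary delicate point is checking that \eqref{natu4} is compatible with the other equations when the substitution $\theta$ is non-injective or identifies free variables, since then $\Gamma_\theta$ may contain ``degenerate'' equations like $x = x$ or force $x_i\theta = x_j\theta$; one must verify the functorial derivations $\Pi(P_i,\theta)$ and $\Pi(P,\theta)$ still compose correctly in $\FORM^{=}_{\C G}$ in these edge cases. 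I expect both difficulties to be surmountable but to require the bulk of the technical work, and I would present the model construction in a separate lemma before deducing the Proposition as above.
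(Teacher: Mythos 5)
Your overall decomposition is exactly the paper's: reduce the claim to showing that the congruence $\equiv^{\mathrm{1nat}}$ generated by $\beta$, $\eta$ and all instances of \eqref{natu4} is consistent, and then invoke the fact that $\equiv^{\mathrm{ext}}$ is the maximum consistent equational theory of $\FORM_{\C G}^{=}$ (established as in the System F case). Where you diverge is at the one step that actually carries the weight, the consistency of $\equiv^{\mathrm{1nat}}$, and there your proposal is both different from the paper's and, as written, not yet a proof. You propose to build a parametric-style relational or presheaf model over $\TERM_{\C G}$ from scratch, and you correctly identify that this is where ``the bulk of the technical work'' would lie --- soundness of transport along equality proofs, the $\eta$-rule, degenerate substitutions --- but none of these difficulties is resolved in the proposal, so the key lemma remains open.

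The paper avoids all of this with a short syntactic argument: a ``forgetful'' translation $(\cdot)^{*}$ into propositional intuitionistic logic sending $t=u$ to $\mathbf{True}$, $P(t)$ to $P$, erasing quantifiers, and deleting all instances of $(=\!\text{E})$ and of the quantifier rules from derivations. This translation is surjective, preserves $\beta\eta$-equivalence, and collapses both sides of \eqref{natu4} to the \emph{same} propositional derivation (the functorial derivations $\Pi(P,\theta)$, $\Pi(P_{i},\theta)$ all become trivial, and $(\Pi_{t})^{*}=(\Pi_{u})^{*}$). Hence an inconsistency of $\equiv^{\mathrm{1nat}}$ would yield an inconsistency of $\beta\eta$ for propositional intuitionistic logic, which is known to be consistent. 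The trade-off is clear: the forgetful translation gives consistency cheaply but no semantic information, whereas your model, if constructed, would be a genuine contribution (indeed the paper's concluding remark explicitly calls for such a semantics and leaves it open whether \eqref{natu4} characterizes $\equiv^{\mathrm{ext}}$). For the purposes of proving this proposition, though, you should either carry out the model construction in full or substitute the elementary translation argument; in its current form the consistency step is a plan, not a proof.
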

\begin{proof}
The claim is proved similarly to Proposition \ref{prop:parametricity}. First, the equivalence $\equiv^{\mathrm{ext}}$ can be shown to be the {maximum consistent equational theory} of $\FORM_{\C G}^{=}$ by an argument similar to the case of System F. It suffices then to show that the equations \eqref{natu4}, together with usual $\beta$- and $\eta$-equations, yield a consistent equational theory.


%

Let $\equiv^{\mathrm{1nat}}$ indicate the equivalence relation on derivations generated from usual $\beta$- and $\eta$-equivalences together with all instances of \eqref{natu4}. We will show that $\equiv^{\mathrm{1nat}}$ is consistent. 
There exists a well-known ``forgetful'' translation from intuitionistic first-order logic into intuitionistic propositional logic, defined over formulas as follows:\footnote{We restrict ourselves for simplicity to the $\supset,\forall$-fragment, but the translation works well for all other propositional connectives.}
 \begin{center}
\adjustbox{scale=0.7}{
\begin{minipage}{1.4\textwidth}
\begin{align}
(t=u)^{*}= \mathbf{True}
\qquad
P(t)^{*} = P \qquad (A\supset B)^{*}= A^{*}\supset B^{*}  \qquad (\forall x.A)^{*}=A^{*} \qquad (\exists x.A)^{*}
\end{align}\end{minipage}}\end{center}
and where, for any first-order derivation $\Pi:\vec A \vdash A$, the propositional derivation $\Pi^{*}: \vec A^{*}\vdash A^{*}$ is obtained from $\Pi$ just by deleting all instances of ($=$E) as well as all instances of rules for quantifiers. 
Observe that, since any propositional derivation is also a first-order derivation, the translation is surjective: any propositional derivation is the translation of some first-order derivation.

Under this translation, one can easily check that two first-order derivations $\Pi,\Sigma$ which are equivalent under $\beta$- or $\eta$-equations are translated into two propositional derivations $\Pi^{*},\Sigma^{*}$ which are still equivalent under $\beta$- or $\eta$-equations. One can easily check then that all instances of the two derivations in \eqref{natu4} translate onto equivalent propositional derivations: on the one hand $(\Pi_{t})^{*}$ is the same derivation as $(\Pi_{u})^{*}$; on the other hand the functorial derivations $\Pi(P,\theta)$ and $\Pi(P_{i},\theta)$ all translate to the trivial derivation only consisting of one assumption. 

We conclude then that if $\Pi\equiv^{\mathrm{1nat}}\Sigma$ holds for some first-order derivations $\Pi,\Sigma$, then $\Pi^{*},\Sigma^{*}$ are propositional derivations equivalent under $\beta$- and $\eta$-equivalence.
Hence, if $\equiv^{\mathrm{1nat}}$ were inconsistent, since the translation $^{*}$ is surjective, then also the $\beta\eta$-equivalence of propositional intuitionistic logic would be inconsistent, something which is well-known not to be the case.

\end{proof}

\subparagraph{New Rule Permutations}

Let us see how the naturality condition \eqref{natu4} can be used to justify new criteria for identity of proofs in first-order logic. 
Firstly, we will justify the claim that the derivations \eqref{fol1} denote the same proof. We will then deduce from this that $\exists x.x=x$ admits precisely one proof.

For any term $t$, let $\theta_{x\mapsto t}$ be the substitution determined by $\Gamma_{\theta_{x\mapsto t}}=\{x=t\}$. 
Let $n=1$, $P_{1}[x]$ be $x=x$, $P[x]=\exists x.x=x$, and $\Pi_{x}$ be the derivation 
 \begin{center}
\adjustbox{scale=0.7}{
\begin{minipage}{1.4\textwidth}
\begin{align}
\Pi_{x}\quad := \quad 
\AXC{}\RL{$=$I}\UIC{$x=x$}
\RL{$\exists$I}\UIC{$\exists x.x=x$}\DP
\end{align}\end{minipage}}
\end{center}
We can then compute
 \begin{center}
\adjustbox{scale=0.7}{
\begin{minipage}{1.4\textwidth}
\begin{align}
\Pi_{x}\quad := \quad 
\AXC{}\RL{$=$I}\UIC{$x=x$}
\RL{$\exists$I}\UIC{$\exists x.x=x$}\DP
\quad \equiv\quad
\AXC{}\RL{$=$I}\UIC{$x=x$}
\RL{$\exists$I}\UIC{$\exists x.x=x$}
\noLine
\UIC{$\Pi(P,\theta_{x\mapsto t})$}
\noLine
\UIC{$\exists x.x=x$}
\DP
\quad \stackrel{\tiny\eqref{natu4}}{\equiv}\quad
\AXC{}\RL{$=$I}\UIC{$x=x$}
\noLine
\UIC{$\Pi(P_{1},\theta_{x\mapsto t})$}
\noLine
\UIC{$t=t$}
\RL{$\exists$I}\UIC{$\exists x.x=x$}\DP
\quad \stackrel{\tiny(\text{$\eta$-equiv.})}{\equiv}\quad
\AXC{}\RL{$=$I}\UIC{$t=t$}
\RL{$\exists$I}\UIC{$\exists x.x=x$}\DP
\quad := \quad 
\Pi_{t}
\end{align}\end{minipage}}\end{center}
using the fact that $\Pi(P,\theta_{x\mapsto t})$ is the identity derivation and that $\Pi(P_{1},\theta_{x\mapsto t})$ is as illustrated below
 \begin{center}
\adjustbox{scale=0.7}{
\begin{minipage}{1.4\textwidth}
\begin{align}\Pi(P_{1},\theta_{x\mapsto t})\quad = \quad \AXC{$x=t$}\AXC{}\RL{$=$I}
\UIC{$x=x$}
\RL{$=$E}
\BIC{$t=t$}
\DP\end{align}\end{minipage}}\end{center}In a similar way one can prove that $\Pi_{x}\equiv \Pi_{u}$, and thus conclude that $\Pi_{t}\equiv\Pi_{u}$.

By arguing in a similar way one can also justify a permutation like the one below, involving the universal quantifier:
 \begin{center}
\adjustbox{scale=0.7}{
\begin{minipage}{1.4\textwidth}
\begin{align}
\AXC{$t=u$}
\AXC{$\forall x.P(x)$}
\RL{$\forall$E}
\UIC{$P(t)$}
\RL{$=$E}
\BIC{$P(u)$}
\DP
\qquad
\equiv
\qquad
\AXC{$\forall x.P(x)$}
\RL{$\forall$E}
\UIC{$P(u)$}
\DP
\qquad\qquad (u=t\theta)
\end{align}\end{minipage}}\end{center}

More generally, \eqref{natu3} can be seen as a general permutative principle which can be used to permute occurrences of the rule ($=$E) with other rules. It could be interesting to study whether one can devise conversion rules for ($=$E) capturing \eqref{natu3} in terms of rewriting. 

\begin{remark}
We do not know whether the equivalence arising from \eqref{natu3} fully characterizes the quotient $\equiv^{\mathrm{ext}}$ in first-order logic. In any case, it might be interesting to develop a denotational semantics, in the style of the dinatural semantics of \citep{Bainbridge1990}, interpreting first-order proofs as natural transformations, and to check whether the naturality condition \eqref{natu3} fully characterizes, if not the full $\equiv^{\mathrm{ext}}$, at least the equivalence arising from this semantics. 

\end{remark}

\section{Conclusions}\label{sec:conclusion}

In this paper we explored the formalization of principles of identity of proofs by means of naturality conditions, suggesting that these conditions provide a finer mathematical understanding of the schematic nature of formal derivations.

The goal of this paper was mostly conceptual, that is, of showing that the invariance under
 different classes of rule permutations can be expressed by stipulating the validity of suitable naturality conditions. 
On the other hand, devising concrete and syntactically well-behaved equational theories capturing rule permutations can be difficult. 
For instance, when considering the equational theory arising from some permutative rule, one would like to be able to prove that, by orienting such rules, any derivation can be associated with a (unique) normal form (i.e.~a derivation to which no other oriented permutation can be applied). This has the desirable consequence of making the equational theory decidable (to know if two derivations are equivalent, check their normal forms) and also of providing a more insightful description of equivalence classes of derivations (which are in bijection with normal forms). 

However, at the current state of the arts, most of the permutative rules treated in this paper have so far resisted a purely syntactic analysis in terms of normal forms, and the resulting equational theories have in some cases been shown not to be decidable. For full propositional intuitionistic logic decidability was only shown recently \citep{Scherer2017}, while for second-order logic the identity of proof relations arising from either $\equiv^{\mathrm{ctx}}$ or from the (di)naturality conditions discussed in Section \ref{sec:parametricity} are both undecidable (see \citep{PistoneCSL}). In both cases, normal forms for the reduction obtained by orienting rule permutations need neither exist nor be unique (see \citep{Lindley2007} and \citep{StudiaLogica}).

To our knowledge, the equational theory for first-order logic sketched in Section \ref{sec:unification} has not been studied before in the literature. It certainly needs further technical investigation. We do not know if this theory is decidable, neither if it can profitably be studied using rewriting techniques (for instance, by defining conversion rules permuting instances of ($=$E) upwards).   
 Recent work on proof nets \citep{Hughes2018} and combinatorial proofs \citep{Strass2021} seems to lead to a similar way of identifying derivations of existentially quantified formulas, and we conjecture that the equational theory arising from these approaches might be related to the naturality condition \eqref{natu4}.

\bibliographystyle{apa}
\bibliography{NaturalityOfProofs.bib}

\begin{thebibliography}{}

\bibitem[\protect\astroncite{Bainbridge et~al.}{1990}]{Bainbridge1990}
Bainbridge, E.~S., Freyd, P.~J., Scedrov, A., and Scott, P.~J. (1990).
\newblock Functorial polymorphism.
\newblock {\em Theoretical Computer Science}, 70:35--64.

\bibitem[\protect\astroncite{Barendregt et~al.}{2013}]{Barendregt2013}
Barendregt, H., Dekkers, W., and Statman, R. (2013).
\newblock {\em Lambda Calculus with Types}.
\newblock Perspectives in Logic. Cambridge University Press.

\bibitem[\protect\astroncite{de~Queiroz and de~Oliveira}{2014}]{deQueiroz2014}
de~Queiroz, R. J. G.~B. and de~Oliveira, A.~G. (2014).
\newblock {\em Natural Deduction for Equality: The Missing Entity}, pages
  63--91.
\newblock Springer Netherlands, Dordrecht.

\bibitem[\protect\astroncite{Do{\v s}en}{2003}]{Dosen2003}
Do{\v s}en, K. (2003).
\newblock Identity of proofs based on normalization and generality.
\newblock {\em The Bulletin of Symbolic Logic}, 9(4):477--503.

\bibitem[\protect\astroncite{Eilenberg and Kelly}{1966}]{EKelly1966}
Eilenberg, S. and Kelly, G.~M. (1966).
\newblock A generalization of the functorial calculus.
\newblock {\em Journal of Algebra}, 3(3):366--375.

\bibitem[\protect\astroncite{Girard}{1989}]{pregoi}
Girard, J.-Y. (1989).
\newblock Towards a geometry of interaction.
\newblock {\em Contemporary Mathematics}, 92.

\bibitem[\protect\astroncite{Girard}{1990}]{ptlc1}
Girard, J.-Y. (1990).
\newblock {\em Proof theory and logical complexity, Vol. 1}.
\newblock Studies in proof theory. Elsevier Science.

\bibitem[\protect\astroncite{Girard}{1995}]{LLSS}
Girard, J.-Y. (1995).
\newblock Linear logic: its syntax and semantics.
\newblock In Girard, J.-Y., Lafont, Y., and Regnier, L., editors, {\em Advances
  in Linear Logic}, volume 222 of {\em London Mathematical Society Lecture
  Notes}. Cambridge University Press.

\bibitem[\protect\astroncite{Girard et~al.}{1989}]{Girard1989}
Girard, J.-Y., Lafont, Y., and Taylor, P. (1989).
\newblock {\em Proofs and Types}, volume~7 of {\em Cambridge Tracts in
  Theoretical Computer Science}.
\newblock Cambridge University Press.

\bibitem[\protect\astroncite{Girard et~al.}{1992}]{Girard1992}
Girard, J.-Y., Scedrov, A., and Scott, P.~J. (1992).
\newblock Normal forms and cut-free proofs as natural transformations.
\newblock In Moschovakis, Y., editor, {\em Logic from {C}omputer {S}cience},
  volume~21, pages 217--241. Springer-Verlag.

\bibitem[\protect\astroncite{Guglielmi et~al.}{2010}]{guglielmi2010}
Guglielmi, A., Gundersen, T., and Parigot, M. (2010).
\newblock {A Proof Calculus Which Reduces Syntactic Bureaucracy}.
\newblock In Lynch, C., editor, {\em Proceedings of the 21st International
  Conference on Rewriting Techniques and Applications}, volume~6 of {\em
  Leibniz International Proceedings in Informatics (LIPIcs)}, pages 135--150,
  Dagstuhl, Germany. Schloss Dagstuhl--Leibniz-Zentrum fuer Informatik.

\bibitem[\protect\astroncite{Hasegawa}{2009}]{Hasegawa2009}
Hasegawa, R. (2009).
\newblock Categorical data types in parametric polymorphism.
\newblock {\em Mathematical Structures in Computer Science}, 4(1):71--109.

\bibitem[\protect\astroncite{Heijltjes and Houston}{2014}]{Heij2014}
Heijltjes, W. and Houston, R. (2014).
\newblock No proof nets for {MLL} with units: proof equivalence in {MLL} is
  {PSPACE}-complete.
\newblock In {\em CSL-LICS 2014}.

\bibitem[\protect\astroncite{Helman}{1987}]{Helman1987}
Helman, G. (1987).
\newblock On the equivalence of proofs involving identity.
\newblock {\em Notre Dame Journal of Formal Logic}, 28(3):297--321.

\bibitem[\protect\astroncite{Hermida et~al.}{2014}]{Hermida2014}
Hermida, C., Reddy, U.~S., and Robinson, E.~P. (2014).
\newblock Logical relations and parametricity - {A} {R}eynolds programme for
  category theory and programming languages.
\newblock In {\em Proceedings of the Workshop on Algebra, Coalgebra and
  Topology (WACT 2013)}, volume 303 of {\em Electronic Notes in Theoretical
  Computer Science}, pages 149--180. Elsevier.

\bibitem[\protect\astroncite{Hughes}{2006}]{Hughes2006}
Hughes, D.~J. (2006).
\newblock Proofs without syntax.
\newblock {\em Annals of Mathematics}, (164):1065--1076.

\bibitem[\protect\astroncite{Hughes}{2018}]{Hughes2018}
Hughes, D. J.~D. (2018).
\newblock Unification nets: Canonical proof net quantifiers.
\newblock In {\em Proceedings of the 33rd Annual ACM/IEEE Symposium on Logic in
  Computer Science}, LICS '18, pages 540--549, New York, NY, USA. Association
  for Computing Machinery.

\bibitem[\protect\astroncite{Hughes et~al.}{2021}]{Strass2021}
Hughes, D. J.~D., Stra{\ss}burger, L., and Wu, J. (2021).
\newblock Combinatorial proofs and decomposition theorems for first-order
  logic.
\newblock In {\em 36th Annual {ACM/IEEE} Symposium on Logic in Computer
  Science, {LICS} 2021, Rome, Italy, June 29 - July 2, 2021}, pages 1--13.
  {IEEE}.

\bibitem[\protect\astroncite{Lambek}{1968}]{Lambek1968}
Lambek, J. (1968).
\newblock Deductive systems and categories {I}. {S}yntactic calculus and
  residuated categories.
\newblock {\em Mathematical systems theory}, 2(4):287--318.

\bibitem[\protect\astroncite{Lambek}{1969}]{Lambek1969}
Lambek, J. (1969).
\newblock Deductive systems and categories {II}. standard constructions and
  closed categories.
\newblock In P.J., H., editor, {\em Category Theory, Homology Theory and their
  Applicataions I}, volume~86 of {\em Lecture Notes in Mathematics}, pages
  76--122, Berlin, Heidelberg. Springer.

\bibitem[\protect\astroncite{Leinster}{2004}]{Leinster2004}
Leinster, T. (2004).
\newblock {\em Higher {O}perads, {H}igher {C}ategories}.
\newblock London Mathematical Society Lecture Note Series. Cambridge University
  Press, Cambridge.

\bibitem[\protect\astroncite{Lindley}{2007}]{Lindley2007}
Lindley, S. (2007).
\newblock Extensional rewriting with sums.
\newblock In {\em Typed Lambda Calculi and Applications, TLCA 2007}, volume
  4583 of {\em Lecture Notes in Computer Science}, pages 255--271. Springer
  Berlin Heidelberg.

\bibitem[\protect\astroncite{Longo and Fruchart}{1997}]{Longo1997}
Longo, G. and Fruchart, T. (1997).
\newblock Carnap's remarks on impredicative definitions and the genericity
  theorem.
\newblock In {\em Logic, {M}ethodology and {P}hilosophy of {S}cience: {L}ogic
  in {F}lorence}. Kluwer.

\bibitem[\protect\astroncite{MacLane}{1978}]{MacLane}
MacLane, S. (1978).
\newblock {\em Categories for the working mathematicians}, volume~5 of {\em
  Graduate Texts in Mathematics}.
\newblock Springer-Verlag, New York.

\bibitem[\protect\astroncite{Melli\`es}{2012}]{Mellies2012}
Melli\`es, P.-A. (2012).
\newblock Game semantics in string diagrams.
\newblock In {\em LICS '12}, pages 481--490, New Orleans, Louisiana.

\bibitem[\protect\astroncite{Okada and Scott}{1999}]{Okada1999}
Okada, M. and Scott, P. (1999).
\newblock A note on rewriting theory for uniqueness of iteration.
\newblock {\em Theory and Applications of Categories}, 6:47--64.

\bibitem[\protect\astroncite{Pistone}{2018}]{BSL}
Pistone, P. (2018).
\newblock Polymorphism and the obstinate circularity of second order logic, a
  victims' tale.
\newblock {\em The Bulletin of Symbolic Logic}, 24(1):1--52.

\bibitem[\protect\astroncite{Pistone and Tranchini}{2021}]{PistoneCSL}
Pistone, P. and Tranchini, L. (2021).
\newblock The {Y}oneda {R}eduction of {P}olymorphic {T}ypes.
\newblock In {\em Computer Science Logic 2021 (CSL 2021)}, volume 183 of {\em
  LIPIcs--Leibniz International Proceedings in Informatics}, pages 35:1--35:22.

\bibitem[\protect\astroncite{Pistone et~al.}{2021}]{SL2}
Pistone, P., Tranchini, L., and Petrolo, M. (2021).
\newblock The naturality of natural deduction {(II)}. on atomic polymorphism
  and generalized propositional connectives.
\newblock To appear in Studia Logica, pre-print available at
  \url{https://arxiv.org/abs/1908.11353}.

\bibitem[\protect\astroncite{Prawitz}{1965}]{Prawitz1965}
Prawitz, D. (1965).
\newblock {\em Natural deduction, a proof-theoretical study}.
\newblock Almqvist \& Wiskell.

\bibitem[\protect\astroncite{Prawitz}{1971a}]{Prawitz1971}
Prawitz, D. (1971a).
\newblock Ideas and results in proof theory.
\newblock In Fenstad, J., editor, {\em Proceedings of the 2nd {S}candinavian
  {L}ogic {S}ymposium ({O}slo)}, volume~63 of {\em Studies in logic and
  foundations of mathematics}. North-Holland.

\bibitem[\protect\astroncite{Prawitz}{1971b}]{Prawitz1973}
Prawitz, D. (1971b).
\newblock Towards a foundation of a general proof theory.
\newblock {\em Logic, Methodology and Philosophy of Science}, VI.

\bibitem[\protect\astroncite{Reynolds}{1983}]{Reynolds1983}
Reynolds, J.~C. (1983).
\newblock Types, abstraction and parametric polymorphism.
\newblock In Mason, R., editor, {\em Information Processing '83}, pages
  513--523. North-Holland.

\bibitem[\protect\astroncite{Scherer}{2017}]{Scherer2017}
Scherer, G. (2017).
\newblock Deciding equivalence with sums and the empty type.
\newblock In {\em Proceedings of the 44th ACM SIGPLAN Symposium on Principles
  of Programming Languages}, POPL 2017, pages 374--386, New York, NY, USA. ACM.

\bibitem[\protect\astroncite{Sorensen and Urzyczyn}{2006}]{CH}
Sorensen, M.~H. and Urzyczyn, P. (2006).
\newblock {\em Lectures on the {C}urry-{H}oward isomorphism}, volume 149 of
  {\em Studies in logic and the foundations of mathematics}.
\newblock Elsevier Science.

\bibitem[\protect\astroncite{Stra{\ss}burger et~al.}{2019}]{Strass2019}
Stra{\ss}burger, L., Heijltjes, W., and Hughes, D. J.~D. (2019).
\newblock {Intuitionistic proofs without syntax}.
\newblock In {\em {LICS 2019 - 34th Annual ACM/IEEE Symposium on Logic in
  Computer Science}}, pages 1--13, Vancouver, Canada. {IEEE}.

\bibitem[\protect\astroncite{Tranchini}{2021}]{Tranchini2021}
Tranchini, L. (2021).
\newblock Proof-theoretic harmony: towards an intensional account.
\newblock {\em Synthese}, 198(5):1145--1176.

\bibitem[\protect\astroncite{Tranchini and Pistone}{2021}]{Isomorphism}
Tranchini, L. and Pistone, P. (2021).
\newblock Intensional harmony as isomorphism.
\newblock to appear in Thomas Piecha and Kai Wehmeier (eds.), Peter
  Schroeder-Heister on Proof-Theoretic Semantics, Outstanding Contributions to
  Logic, Springer; available at
  \url{http://logica.uniroma3.it/pistone/Harmony.pdf}.

\bibitem[\protect\astroncite{Tranchini et~al.}{2019}]{StudiaLogica}
Tranchini, L., Pistone, P., and Petrolo, M. (2019).
\newblock The naturality of natural deduction.
\newblock {\em Studia Logica}, 107(1):195--231.

\bibitem[\protect\astroncite{von Plato}{2001}]{Plato2001}
von Plato, J. (2001).
\newblock Natural deduction with general elimination rules.
\newblock {\em Archive for Mathematical Logic}, 40(7):541--567.

\end{thebibliography}

\end{document}